\numberwithin{equation}{section}
\newcommand{\C}{{\mathbb C}}
\newcommand{\Z}{{\mathbb Z}}
\newcommand{\B}{{\mathbf{B}}}
\newcommand{\V}{{\mathbf V}}
\newcommand{\gl}{{\mathfrak{gl}}}
\newcommand{\q}{{\mathfrak{q}}}
\newcommand{\R}{{\mathbb R}}
\newcommand{\seteq}{\mathbin{:=}}
\theoremstyle{plain}
\newtheorem{lemma}{Lemma}[section]
\newtheorem{prop}[lemma]{Proposition}
\newtheorem{theorem}[lemma]{Theorem}
\newcommand{\Prop}{\begin{prop}}
\newcommand{\enprop}{\end{prop}}
\newcommand{\Lemma}{\begin{lemma}}
\newcommand{\enlemma}{\end{lemma}}
\newcommand{\Th}{\begin{theorem}}
\newcommand{\enth}{\end{theorem}}
\newtheorem{corollary}[lemma]{Corollary}
\newcommand{\Cor}{\begin{corollary}}
\newcommand{\encor}{\end{corollary}}
\newtheorem{definition}[lemma]{Definition}
\newcommand{\Def}{\begin{definition}}
\newcommand{\edf}{\end{definition}}
\theoremstyle{definition}
\newtheorem{remark}[lemma]{Remark}
\newtheorem{example}[lemma]{Example}
\newcommand{\g}{{\mathfrak{g}}}
\newcommand{\Hom}{\operatorname{Hom}}
\newcommand{\isoto}[1][]{\mathop{\xrightarrow[#1]%
{{\raisebox{-.6ex}[0ex][-.6ex]{$\mspace{2mu}\sim\mspace{2mu}$}}}}}
\newcommand{\tensor}{\otimes}
\newcommand{\eq}{\begin{eqnarray}}
\newcommand{\eneq}{\end{eqnarray}}
\newcommand{\eqn}{\begin{eqnarray*}}
\newcommand{\eneqn}{\end{eqnarray*}}
\newcommand{\on}{\operatorname}
\newcommand{\QED}{\end{proof}}
\newcommand{\Proof}{\begin{proof}}
\newcommand{\soplus}{\mathop{\mbox{\normalsize$\bigoplus$}}\limits}
\newcommand{\cl}{\colon}
\newcommand{\ba}{\begin{array}}
\newcommand{\ea}{\end{array}}
\newcommand{\bi}{\begin{enumerate}[{\rm(i)}]}
\newcommand{\set}[2]{\left\{#1 \mathbin{;} #2 \right\}}
\newcommand{\eqsub}{\begin{subequations}\begin{eqnarray}}
\newcommand{\eneqsub}{\end{eqnarray}\end{subequations}}
\newcommand{\ol}{\overline}
\newcommand{\nc}{\newcommand}
\nc{\la}{\lambda}
\nc{\lam}{\lambda}
\nc{\U}[1][\g]{U_q(#1)}
\nc{\te}{\tilde{e}}
\nc{\tei}{\tilde{e}_i}
\nc{\tf}{\tilde{f}}
\nc{\tfi}{\tilde{f}_i}
\nc{\tU}{\widetilde U_q(\g)}
\nc{\tE}{\tilde{E}}
\nc{\tF}{\tilde{F}}
\nc{\tk}{\tilde{k}}
\nc{\tkone}{\tk_{\ol{1}}}
\nc{\teone}{\tilde{e}_{\ol{1}}}
\nc{\tfone}{\tilde{f}_{\ol{1}}}
\nc{\teibar}{\tilde{e}_{\ol{i}}} \nc{\tfibar}{\tilde{f}_{\ol{i}}}
\nc{\tki}{{\tk}_{\ol {i}}}
\nc{\BZ}{{\mathbb{Z}}}
\nc{\al}{\alpha}
\nc{\qs}{{q}}
\nc{\lan}{\langle}
\nc{\ran}{\rangle}
\nc{\re}{{\mathrm{re}}}
\nc{\wt}{\operatorname{wt}}
\nc{\ch}{\operatorname{ch}}
\nc{\Uf}[1][\g]{U^-_q(#1)}
\nc{\Ue}{U^+_q(\g)}
\nc{\eps}{\varepsilon}
\nc{\vphi}{\varphi}
\nc{\sphi}{\varphi^*}
\nc{\seps}{\varepsilon^*}
\nc{\nn}{\nonumber}
\def\max{{\mathop{\mathrm{max}}}}
\nc{\vp}{\varpi}
\nc{\cls}{{\operatorname{cl}}}
\nc{\Wt}{{\operatorname{Wt}}}
\nc{\Us}{U'_q(\g)}
\nc{\La}{\Lambda}
\nc{\ro}{{\rm(}}
\nc{\rf}{{\rm)}}
\nc{\norm}{{\mathrm{norm}}}
\nc{\qbox}{\quad\mbox}
\nc{\braid}{{\mathfrak{B}}}
\nc{\Ad}{\operatorname{Ad}}
\nc{\Aut}{\operatorname{Aut}}
\nc{\dt}[1]{\tilde{\tilde #1}}
\nc{\Sn}{S^{{\mathrm{norm}}}}
\nc{\aff}{{\mathrm{aff}}}
\nc{\rk}{{\mathrm{rk}}}
\nc{\tQ}{\widetilde{Q}}
\nc{\tP}{\widetilde{P}}
\nc{\tW}{\widetilde{W}}
\nc{\Dyn}{\mathrm{Dyn}}
\nc{\tD}{\widetilde{\Delta}}
\nc{\height}{{\operatorname{ht}}}
\nc{\bl}{\bigl}
\nc{\br}{\bigr}
\nc{\Hecke}{\mathrm{H}}
\nc{\HA}{\Hecke^{\mathrm{A}}}
\nc{\HB}{\Hecke^{\mathrm{B}}}
\nc{\K}{\mathrm{K}}
\newcommand{\scbul}{{\,\raise1pt\hbox{$\scriptscriptstyle\bullet$}\,}}
\nc{\vac}{{\vphi}}
\nc{\Bt}{\B_\theta(\g)}
\nc{\be}{\begin{enumerate}}
\nc{\ee}{\end{enumerate}}
\nc{\low}{{\mathrm{low}}}
\nc{\upper}{{\mathrm{up}}}
\nc{\Zodd}{\Z_{\mathrm{odd}}}
\nc{\Ft}[1][n]{\mathbb{P}\mathrm{ol}_{#1}}
\nc{\Ftf}[1][n]{\widetilde{\mathbb{P}\mathrm{ol}}_{#1}}
\nc{\KA}{\on{K}^{\mathrm{A}}}
\nc{\KB}{\on{K}^{\mathrm{B}}}
\nc{\Res}{\on{Res}}
\nc{\Fc}[1][{n,m}]{\mathbf{F}_{#1}}
\nc{\tphi}{\tilde{\varphi}}
\nc{\CO}{\mathscr{O}}
\nc{\inte}{\mathrm{int}}
\nc{\Oint}{\mathcal{O}^{\ge0}_{\inte}}
\nc{\vs}{\vspace}
\nc{\tL}{\widetilde{L}}
\nc{\noi}{\noindent}
\nc{\heigh}{\mathfrak{t}}
\nc{\lowest}{\mathfrak{l}}
\newenvironment{red}{\relax\color{red}}{\hspace*{.5ex}\relax}
\newcommand{\ber}{\begin{red}}
\newcommand{\er}{\end{red}}
\newlength{\mylength}
\newenvironment{blue}{\relax\color{blue}}{\hspace*{.5ex}\relax}
\newcommand{\beb}{\begin{blue}}
\newcommand{\eb}{\end{blue}}
\title{A Categorification of $\displaystyle{\mathfrak q} (2)$-crystals}
\author[D. Grantcharov, J. H. Jung, S.-J. Kang,  M. Kim]{Dimitar Grantcharov$^{1}$, Ji Hye Jung$^{2}$, Seok-Jin
Kang$^{3}$, Myungho Kim}
\address{Department of Mathematics \\
         University of Texas at Arlington \\ Arlington, TX 76021, USA}
         \email{grandim@uta.edu}
\address{Department of Mathematical Sciences \\
         Seoul National University \\ Seoul 151-747, Korea}
         \email{jung.ji.hye@hotmail.com}
\address{Department of Mathematical Sciences
         and
         Research Institute of Mathematics \\
         Seoul National University \\ Seoul 151-747, Korea}
         \email{sjkang@snu.ac.kr}
\address{Department of Mathematics \\
          Kyung Hee University \\
           Seoul 02447, Korea}
           \email{mkim@khu.ac.kr}
\thanks{$^{1}$This work was partially supported by  NSA grant H98230-13-1-0245}
\thanks{$^{2}$This work was  supported by  NRF Grant \# 2014-021261 and
by NRF-2013R1A1A2063671}
\thanks{$^{3}$This work was supported by NRF Grant  \# 2014-021261 and by NRF Grant \# 2013-055408}
\keywords{{crystal bases}, {odd Kashiwara operators}, {quantum queer
superalgebras, }{$\displaystyle{\mathfrak q} (2)$-categorification}}
\subjclass[2010]{17B37, 81R50}
\begin{document}

\maketitle

\begin{abstract}
We provide a categorification of $\mathfrak{q}(2)$-crystals on the
singular  $\mathfrak{gl}_{n}$-category ${\mathcal O}_{n}$. Our
result extends the $\mathfrak {gl}_{2}$-crystal structure on ${\rm
Irr} ({\mathcal O}_{n})$ induced from the work of
Bernstein-Frenkel-Khovanov. Further properties of the ${\mathfrak
q}(2)$-crystal ${\rm Irr} ({\mathcal O}_{n})$ are also discussed.
\end{abstract}

\section*{Introduction}

The {\it crystal basis theory} is one of the most prominent
discoveries in the modern representation theory. Crystal bases,
which can be understood as {\it global bases} at $q=0$, have been
introduced by Kashiwara \cite{Kas90,Kas91,Kas93} and have many
significant applications to a wide variety of mathematical and
physical theories. In particular, their nice behavior with respect
to tensor products leads to elegant explanations of a lot of
combinatorial  phenomena  such as combinatorics of Young tableaux and
Young walls \cite{Kang2003, KN}. On the other hand, Lusztig took a
geometric approach to develop the {\it canonical basis theory} \cite{Lus90,Lus91},
which turned out to be deeply related to {\it categorification
theory} as is the case with global basis theory.

In \cite{BFK}, Bernstein, Frenkel and Khovanov discovered a close
connection between the singular $\mathfrak{gl}_{n}$-category
$\mathcal{O}_{n}$ and the $n$-fold tensor power ${\V}^{\otimes n}$,
where $\V$ is the 2-dimensional natural representation of
$\mathfrak{sl}_{2}$. Their result initiated the categorification
program of $\mathfrak{sl}_{2}$-representation theory, which was
extended to the quantum algebra $U_q(\mathfrak{sl}_2)$ \cite{Str}
and to general tensor products of finite-dimensional
$U_q(\mathfrak{sl}_2)$-modules \cite{FKS}. That is, they obtained
several versions of (weak) {\it $\mathfrak{sl}_{2}$-categorification} in
the sense of Chuang and Rouquier \cite{CR}.

In recent years, there is growing interest in the crystal basis
theory of the quantum superalgebras. A major accomplishment in this
direction is the development of crystal basis theory of the quantum
superalgebra $U_{q}(\mathfrak{gl}(m|n))$ for the tensor  modules; i.e., the
modules arising from tensor powers of the natural representation
\cite{BKK}. Such a theory was developed for the quantum
superalgebras $U_{q}(\mathfrak{q}(n))$ for the category of tensor
modules \cite{GJKKK, GJKKK2} and
 $U_{q}(\mathfrak {osp}(m|2n))$ for
a certain semisimple tensor category of $U_{q}(\mathfrak {osp}(m|2n))$-modules
\cite{Kw}.

The $\mathfrak{q}(n)$-case is especially interesting and challenging
both from algebraic and combinatorial perspectives. A definition of
$U_{q}(\mathfrak{q}(n))$ was first introduced in \cite{Ol} using the
Fadeev-Reshetikhin-Turaev formalism. In \cite{GJKK}, an equivalent
definition of $U_{q}(\mathfrak{q}(n))$ was given in the spirit of
Drinfeld-Jimbo presentation and the highest weight representation
theory was developed. Moreover, in \cite{GJKKK, GJKKK2, GJKKK3}, the
crystal basis theory for $U_{q}(\mathfrak {q}(n))$-modules was
established, which provides a representation theoretic
interpretation of combinatorics of semistandard decomposition
tableaux.

We now explain the main result of this paper. One important
consequence of $\mathfrak {sl}_{2}$-categorification in \cite{BFK}
is that the set ${\rm Irr}({\mathcal O}_{n})$ of isomorphism classes
of simple objects in ${\mathcal O}_{n}$ admits a $\mathfrak{
gl}_{2}$-crystal structure. The categorified Kashiwara operators
$\mathcal{E}$ and $\mathcal{F}$ are constructed using the
translation functors given by the $n$-dimensional natural
$\mathfrak{gl}_{n}$-module
$L(e_{1})$ and its dual $L(e_{1})^*$.

In the present paper, we investigate $\mathfrak{q}(2)$-crystal
structure on ${\rm Irr}({\mathcal O}_{n})$. We also use the
translation functors to construct the categorified odd Kashiwara
operators $\overline{\mathcal E}$ and $\overline{\mathcal F}$.
However, we use the infinite-dimensional irreducible highest weight
$\mathfrak{gl}_{n}$-module   $L(e_{n})$ with highest weight
 $e_{n}$
and its dual $L(e_n)^*$, which fits very
naturally in our setting. We believe our result is the first step
toward the ${\mathfrak q}(2)$-categorification theory and it will
generate  various interesting developments in categorical
representation theory of (quantum) superalgebras.

The organization of the paper is as follows. In the first  two
 sections, we collect some of basic definitions and
properties related to the $\mathfrak{gl}_2$-crystal structure on
${\rm Irr}({\mathcal O}_{n})$. The third section is devoted
to the properties of ${\mathfrak{q}} (2)$-crystals used in this
paper. The definition of categorified odd Kashiwara operators on
${\mathcal O}_{n}$, as well as the main result of this paper, are
included in Section 4. In the last section, we discuss
further properties of the $\mathfrak{q} (2)$-crystals related to
parabolic subcategories of ${\mathcal O}_{n}$.

\vskip 3mm

\noindent{\bf Acknowledgements.} We would like to thank V. Mazorchuk
for the fruitful discussions and for bringing our attention to the
paper \cite{Kah}. The first author would like to thank Seoul
National University for the warm hospitality and the excellent
working conditions.
\vskip 5mm

%%%%%%%%%%%%%%%%%%%%%%%%%%%%%%%%%%%%%%%%%%%%%%%%%%%%%%%%%%%%%%%%%%%%%%%%%%%%%%%%%%%%

\section{The category  $\mathcal O_n$}

Let $\mathfrak{g} = \mathfrak{gl}_{n}$  $(n \ge 2)$ be the general linear Lie
algebra over the complex number field $\C$ with the triangular decomposition $\g = \mathfrak n_- \oplus
\mathfrak h \oplus \mathfrak n_+$. We denote by $U(\g)$ its
universal enveloping algebra and by $\mathcal Z (\g)$ the center of
$U(\g)$. Choose an orthonormal basis $\{e_1,\ldots,e_n\}$ of $\R^n$
and identify $\C \otimes_{\R} \R^n$ with $\mathfrak h^*$, the dual
of $\mathfrak h$. Thus $\Delta := \{e_i - e_j \mid i<j \}$ is the
set of positive roots and $\Pi:=\{ e_i - e_{i+1} \mid 1 \le i \le
n-1 \}$ is the set of simple roots. The Weyl group of $\mathfrak{g}$
is isomorphic to the symmetric group $S_{n}$, which acts on
${\mathfrak h}^*$ by permuting $e_{i}$'s.

We say that a $\g$-module $M$ is a {\it weight module} if  $$M =
\bigoplus_{\lambda \in {\mathfrak h}^*} M^{\lambda}, \ \
\text{where} \ \ M^{\lambda} = \{ m \in M \; | \; hm = \lambda(h)m \
\mbox{ for all } h \in {\mathfrak h} \}.$$ A linear functional
$\lambda \in {\mathfrak h}^*$ is called a {\it weight} of $M$ if
$M^{\lambda} \neq 0$. We denote by  $\text{Supp}(M)$  the
set of weights of $M$. Note that any weight of $M$ is a linear
combination of $e_{i}$'s. For a weight module $M= \bigoplus_{\lambda
\in {\mathfrak h}^*} M^{\lambda}$, let  $M^{*}: = \bigoplus_{\lambda
\in {\mathfrak h}^*} \Hom_{\mathbb C} (M^{\lambda}, {\mathbb C})$ be
the restricted dual of $M$ with the $\mathfrak {g}$-module action
given by
$$ (g f) (m) = f (-gm) \ \ \text{for} \ g \in \mathfrak{g}, f \in
M^*, m \in M.$$
Note that
$\text{Supp}(M^*) = - \text{Supp}(M)$

\vskip 3mm

Let $\V = \C v_{1} \oplus \C v_{2}$ be the 2-dimensional natural
representation of $\mathfrak{gl}_{2}$, where the
$\mathfrak{gl}_{2}$-action is given by left multiplication. Hence we
have $\text{wt}(v_1) = e_1$ and $\text{wt}(v_2) = e_2$. Recall that
the special linear Lie algebra $\mathfrak{sl}_{2}$ is the subalgebra
of $\mathfrak{gl}_{2}$ generated by
$$
E= \left(\begin{matrix} 0 & 1 \\ 0 & 0  \end{matrix}  \right), \quad
F= \left(\begin{matrix} 0 & 0 \\ 1 & 0  \end{matrix}  \right), \quad
H= \left(\begin{matrix} 1 & 0 \\ 0 & -1  \end{matrix}  \right).
$$
Thus its universal enveloping algebra $U(\mathfrak{sl}_{2})$ is the
associative $\C$-algebra generated by $E, F, H$ with defining
relations
\begin{align*}
  EF-FE=H, \quad HE-EH=2E, \quad HF-FH=-2F.
\end{align*}
The $\mathfrak{gl}_{2}$-action on $\V$ induces an
$\mathfrak{sl}_{2}$-action given by
\begin{equation*}
\begin{aligned}
& H v_1 = v_1, \quad E v_1 = 0, \quad F v_1 = v_2, \\
& H v_2 = -v_2, \quad E v_2 = v_1, \quad F v_2 =0.
\end{aligned}
\end{equation*}
It follows that the $\mathfrak{sl}_{2}$-weight of $v_{1}$ is $1$ and
that of $v_{2}$ is $-1$. For each $n\ge 2$, the tensor space
$\V^{\otimes n}$ admits a $U({\mathfrak sl}_{2})$-module structure
via the comultiplication $\Delta : U(\mathfrak{sl}_2) \rightarrow
U(\mathfrak{sl}_2) \otimes U(\mathfrak{sl}_2) $ given by
\begin{align*}
\Delta(E)=E \otimes 1 + 1\otimes E, \quad \Delta(F)=F \otimes 1 +
1\otimes F, \quad  \Delta(H)=H \otimes 1 + 1 \otimes H.
\end{align*}

\vskip 3mm

Let ${\mathcal W}  = {\mathcal W} (\g)$ be the category of all
weight modules $M$ such that $\dim M^{\lambda} < \infty$ for all
$\lambda \in {\mathfrak h}^*$. We denote by $\mathcal O=\mathcal
O(\g)$ the full subcategory of ${\mathcal W} (\g)$ consisting  of
finitely generated $U(\g)$-modules that are  locally $U(\mathfrak
n_+)$-nilpotent. The category $\mathcal O$ is  known as the {\it
Bernstein-Gelfand-Gelfand category}.
For a detailed exposition of the category $\mathcal O$, see for example,
\cite{H}.

Let
$$\rho =\dfrac{1}{2} \sum_{i<j} (e_i - e_j) = \dfrac{n-1}{2}e_1 + \dfrac{n-3}{2}e_2 + \cdots + \dfrac{1-n}{2}e_n,$$
the half sum of positive roots. For a sequence $a_1, \ldots, a_n$ of
$1$'s and $2$'s, we denote by $M(a_1, \ldots, a_n)$ and  $L(a_1,
\ldots, a_n)$ the Verma module with highest weight $a_1 e_1 + \cdots
+ a_n e_n -\rho$ and its simple quotient, respectively.

For each $i \in \Z$, define  $\mathcal O_{i,n-i}$ to be the full
subcategory of $\mathcal O$ consisting of
$\mathfrak{gl}_{n}$-modules $M$ whose composition factors are of the
form $L(a_1, \ldots, a_n)$  with exactly $i$-many $2$'s. The
category ${\mathcal O}_{i, n-i}$ is a singular block of $\mathcal O$
corresponding to the subgroup $S_{i} \times S_{n-i}$ of $S_{n}$. For
$i <0$ or $i >n$, ${\mathcal O}_{i, n-i}$ consists of the zero
object only. We define
\begin{equation}\label{eq:On}
\mathcal O_n\seteq \soplus_{i=0}^{n} \  \mathcal O_{i,n-i},
\end{equation}
the main category  of our interest. We denote $G(\mathcal O_n):= \C
\otimes K(\mathcal O_n)$, where $K(\mathcal O_n)$ is the
Grothendieck group of $\mathcal O_n$. As usual, we write $[M]$ for
the isomorphism class of an object $M$ in $\mathcal O_n$.

An alternative description of ${\mathcal O}_{i, n-i}$ is given as
follows. Let $\chi: {\mathcal Z}(\mathfrak{g}) \rightarrow \C$ be an
algebra homomorphism. We define ${\mathcal O}_{\chi}$ to be the
subcategory of $\mathcal O$ consisting of $\mathfrak{g}$-modules $M$
such that for each $z \in {\mathcal Z}(\mathfrak{g})$ and $m \in M$,
we have $(z - \chi(z))^k \, m =0$ for some $k>0$. Then we get the
 {\it central character decomposition}  $${\mathcal O}=
\bigoplus_{\chi\in {\mathcal Z}(\mathfrak{g})^{\vee}} {\mathcal
O}_{\chi},$$ where ${\mathcal Z}(\mathfrak{g})^{\vee}$ denotes the
set of all algebra homomorphisms ${\mathcal Z}(\mathfrak{g})
\rightarrow \C$. Note that ${\mathcal Z}(\mathfrak{g})$ acts on a
highest weight module with highest weight $\lambda$ by a constant
$\chi^{\lambda}(z)$ $(z \in {\mathcal Z}(\mathfrak{g}))$. We will
write ${\mathcal O}_{\lambda}$ for ${\mathcal O}_{\chi^{\lambda}}$.

On the other hand, for each $\nu \in {\mathfrak h}^*$, set
$\overline{\nu}=\nu + \Z \Delta \in {\mathfrak h}^* \big/ \Z
\Delta$, where $\Z \Delta$ denotes the root lattice of $\mathfrak g$.
 We define ${\mathcal O}[\overline{\nu}]$ to be the full
subcategory of $\mathcal O$ consisting of
$\mathfrak{gl}_{n}$-modules $M$ such that $\text{wt}(M) \subset
 \overline{\nu}$.  Then we have
 {\it the support decomposition}
$${\mathcal O} = \bigoplus_{\overline{\nu} \in {\mathfrak h}^* \big/ \Z
\Delta} {\mathcal O}[\overline{\nu}].$$
 The category ${\mathcal
O}_{i,n-i}$ coincides with ${\mathcal O}_{\omega_i - \rho}$, and
${\mathcal O}_{\omega_i - \rho}$ is a full subcategory of
${\mathcal O}[\overline{\omega_i - \rho}]$, where
$$\omega_i := 2 \sum_{j=1}^{i} e_j + \sum_{j=i+1}^{n} e_j$$
is the shifted $i$-th fundamental weight.

Similarly, the category $\mathcal W$ has the central character
decomposition and the support decomposition
$${\mathcal W}= \bigoplus_{\chi\in
{\mathcal Z}(\mathfrak{g})^{\vee}} {\mathcal W}_{\chi} =
\bigoplus_{\overline{\nu} \in {\mathfrak h}^* \big/ \Z \Delta}
{\mathcal W}[\overline{\nu}].$$ Note that $\mathcal W$ has the
central character decomposition due to the fact that the ${\mathcal
Z}(\mathfrak{g})$-action is stable on each weight space. Set
$${\mathcal W}_{\lambda}:={\mathcal W}_{\chi^{\lambda}}, \quad
{\mathcal W}_{i, n-i}:={\mathcal W}_{\omega_i - \rho} \cap {\mathcal
W}[\overline{\omega_i - \rho}], \quad {\mathcal W}_{n}:=
\bigoplus_{i=0}^{n} {\mathcal W}_{i, n-i}.$$

For each $0 \le i \le n$, let ${\rm pr}_{i}: {\mathcal W}
\rightarrow \, {\mathcal W}_{i,n-i}$ be the canonical projection.
Clearly, ${\rm pr}_i (\mathcal O ) = {\mathcal O_{i,n-i}}$.
 Following \cite{BFK},
we define
\begin{align}
\mathcal E_i :  {\mathcal O}_{i,n-i} \to  {\mathcal O}_{i+1,n-i-1}, \; \mathcal E_i := {\rm pr}_{i+1} \circ  \big(- \otimes L(e_1) \big), \\
\mathcal F_i :  {\mathcal O}_{i,n-i} \to  {\mathcal O}_{i-1,n-i+1}, \; \mathcal F_i := {\rm pr}_{i-1} \circ  \big(- \otimes L(e_1)^* \big),
\end{align}
where  $L(e_1)$ is the $n$-dimensional natural representation of
$\g$. Now we define the  exact endofunctors  ${\mathcal E}$ and ${\mathcal
F}$ on  $\mathcal O_n$ by
\begin{align}
\mathcal E := \soplus_{i=0}^{n} \mathcal E_i, \quad
\mathcal F := \soplus_{i=0}^{n} \mathcal F_i.
\end{align}
 We denote by $[\mathcal E]$ and $[\mathcal F]$ the linear endomorphisms on $G(\mathcal O_n)$ induced from the functors $\mathcal E$ and $\mathcal F$, respectively.

The following theorem plays an important role in this paper.

\begin{theorem}  \rm (\cite{BFK}) \label{sl2}
  \begin{enumerate}
\item $(\mathcal E, \mathcal F)$ is a biadjoint pair.

%\item The functors $\mathcal E, \mathcal F$ are exact.

\item The correspondence  $E \mapsto [\mathcal E]$,
$F \mapsto [\mathcal F]$ defines a $U(\mathfrak{sl}_2)$-action  on
$G(\mathcal O_n)$.

\item
 The simple objects in $\mathcal O_n$ correspond to  weight  vectors  in $G(\mathcal O_n)$.

 \item
   There is a $U(\mathfrak{sl}_2)$-module isomorphism
\begin{equation}
\begin{array}{ccc}
\Upsilon \ : \ G(\mathcal O_n) & \rightarrow & \V^{\otimes n} \\
 \left[M(a_1, \ldots, a_n)\right]  &\mapsto & v_{a'_1} \otimes \cdots \otimes v_{a'_n},
\end{array}
 \end{equation}
 where $1':=2$ and $2':=1$.
      \end{enumerate}
\end{theorem}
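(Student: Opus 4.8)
The plan is to prove the four assertions of Theorem~\ref{sl2} in order, drawing on the standard structure theory of category $\mathcal O$ and the translation functor machinery of \cite{BGG, Jantzen}, exactly as in \cite{BFK}.

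For (1), I would first recall that on each block $\mathcal O_{i,n-i}$ the functor $-\otimes L(e_1)$ is exact and has left and right adjoint $-\otimes L(e_1)^*$ (tensoring by a finite-dimensional module is biadjoint to tensoring by its dual, with unit and counit coming from the evaluation and coevaluation maps of $L(e_1)$). Composing with the exact projections ${\rm pr}_{i\pm1}$, which are themselves biadjoint to the inclusions of blocks, shows that $\mathcal E_i$ and $\mathcal F_{i+1}$ form a biadjoint pair on the level of blocks; summing over $i$ gives that $(\mathcal E,\mathcal F)$ is biadjoint on $\mathcal O_n$. For (2), I would compute the action of $[\mathcal E]$ and $[\mathcal F]$ on the classes of Verma modules. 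Since $M(a_1,\ldots,a_n)\otimes L(e_1)$ has a Verma flag with subquotients $M(a_1,\ldots,a_j+1,\ldots,a_n)$ for $j=1,\ldots,n$ (by the tensor identity and the fact that $L(e_1)$ has weights $e_1,\ldots,e_n$ each with multiplicity one), projecting to the block with $i+1$ twos keeps exactly those terms where $a_j=1$ is changed to $2$; dually for $\mathcal F$. Passing to the Grothendieck group and comparing with the formulas for $\Delta(E),\Delta(F)$ acting on $\V^{\otimes n}$ under the dictionary $a_j=1\leftrightarrow v_2$, $a_j=2\leftrightarrow v_1$, one reads off the $\mathfrak{sl}_2$-relations $[\mathcal E][\mathcal F]-[\mathcal F][\mathcal E]=[\mathcal H]$ etc., where $[\mathcal H]$ acts on $\mathcal O_{i,n-i}$ as multiplication by the scalar $(2i-n)$. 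This simultaneously proves (4): the map $\Upsilon$ is well-defined because $\{[M(a_1,\ldots,a_n)]\}$ is a basis of $G(\mathcal O_n)$ and Verma modules with the same multiset of entries lie in the same block with equal image, it is an isomorphism of vector spaces by a dimension count ($\dim\mathcal O_{i,n-i}$ equals the number of tensor words with $i$ copies of $v_1$), and it is $U(\mathfrak{sl}_2)$-equivariant by the Verma-module computation just described. Finally (3): the highest-weight classification in $\mathcal O$ gives that the simple objects of $\mathcal O_{i,n-i}$ are the $L(a_1,\ldots,a_n)$ with $i$ twos, and since the weight-space decomposition of $\V^{\otimes n}$ under $H$ is precisely the block decomposition $\bigoplus_i G(\mathcal O_{i,n-i})$, every $[L(a_1,\ldots,a_n)]$ is an $H$-weight vector.

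The main obstacle is the Verma-flag computation underlying (2) and (4): one must verify carefully that $M(\mu)\otimes L(e_1)$ has a filtration whose subquotients are exactly the $M(\mu+e_j)$, and then that after applying ${\rm pr}_{i+1}$ the terms surviving in the Grothendieck group are governed by central characters in the expected way — i.e. that $\mu+e_j$ and $\mu+e_k$ lie in the same block only when the corresponding tableaux words agree as multisets. This uses Harish-Chandra's description of central characters together with the combinatorics of the dominance/linkage on $e_i$-coordinates; it is routine but is where all the actual content sits. Everything else (exactness, biadjointness, the dimension count) is formal.
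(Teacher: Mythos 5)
The paper gives no proof of this theorem---it is quoted verbatim from \cite{BFK}---and your sketch is a faithful reconstruction of the standard argument there: biadjointness from tensoring with a finite-dimensional module and its dual combined with block projections, the Verma-flag computation for $M(\mu)\otimes L(e_1)$ with central-character bookkeeping to identify the surviving subquotients, and the resulting identification of $G(\mathcal O_n)$ with $\V^{\otimes n}$. Your outline is correct and matches the cited source's approach, so there is nothing to add.
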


\begin{theorem}
\rm(\cite[\S 7.4.3]{CR})
The category $\mathcal O_n$ provides a (strong) $\mathfrak{sl}_2$-categorification in the sense of Chuang-Rouquier.

\end{theorem}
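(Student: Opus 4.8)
The plan is to check the axioms of an $\mathfrak{sl}_2$-categorification one by one; this is essentially what is carried out for (parabolic) category $\mathcal O$ in \cite[\S 7.4.3]{CR}, so I only indicate the ingredients. The underlying abelian category is $\mathcal O_n$, which is $\mathbb C$-linear and $\Hom$-finite, and in which every object has finite length with only finitely many isomorphism classes of simple objects, since $\mathcal O_n$ is the finite direct sum \eqref{eq:On} of singular blocks of the Bernstein-Gelfand-Gelfand category $\mathcal O$ for $\mathfrak{gl}_n$. The endofunctors are $\mathcal E$ and $\mathcal F$; they are exact (tensoring with the finite-dimensional module $L(e_1)$ and applying ${\rm pr}$ are exact), and by Theorem \ref{sl2}(1) they form a biadjoint pair, so in particular $\mathcal F$ is isomorphic to a left adjoint of $\mathcal E$, as the definition requires. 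The remaining ``numerical'' input is Theorem \ref{sl2}(2)--(4): $[\mathcal E]$ and $[\mathcal F]$ make $G(\mathcal O_n)$ into an $\mathfrak{sl}_2$-module isomorphic to $\V^{\otimes n}$, which is integrable because it is finite-dimensional; the weight decomposition is $G(\mathcal O_n) = \bigoplus_i G(\mathcal O_{i,n-i})$, matching the block decomposition \eqref{eq:On}; and by Theorem \ref{sl2}(3) the classes of the simple objects of $\mathcal O_n$ form a weight basis adapted to it.

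What remains is the extra datum of an action of the degenerate affine Hecke algebra $\mathbb H_d$ on $\mathcal E^d$ for every $d$, equivalently a pair $X\in\End(\mathcal E)$, $T\in\End(\mathcal E^2)$ with $T^2 = 1_{\mathcal E^2}$, the braid relation on $\mathcal E^3$, and the relation $T\circ(X\,1_{\mathcal E}) = (1_{\mathcal E}\,X)\circ T - 1_{\mathcal E^2}$ (up to the usual sign conventions). I would construct these from $V:=L(e_1)$ using $\Omega := \sum_{p,q} e_{pq}\otimes e_{qp}\in U(\g)\otimes U(\g)$, where the $e_{pq}$ are the matrix units of $\mathfrak{gl}_n$: for $M\in\mathcal O_n$ the operator $\Omega$ on $M\otimes V$ commutes with the diagonal $\g$-action, hence is a morphism in $\mathcal O$, and composing with ${\rm pr}$ gives $X\colon\mathcal E\to\mathcal E$; the flip $m\otimes u\otimes v\mapsto m\otimes v\otimes u$ on $M\otimes V\otimes V$ is $\g$-linear and, after ${\rm pr}$, gives $T\colon\mathcal E^2\to\mathcal E^2$. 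That $T^2 = 1$ and that the $T$'s on consecutive strands braid is clear because these are honest permutations; the relation that needs work is the Hecke relation above, and it reduces to the elementary fact that $\sum_{p,q} e_{pq}\otimes e_{qp}$ acts on $V\otimes V$ as the flip $P$, so that conjugating $X\,1_{\mathcal E}$ by $T$ produces $1_{\mathcal E}\,X$ corrected by the diagonal term $\Omega_{V_1V_2} = P = T$, which is exactly that relation. This is the point where one must be careful with signs and with tracking the projections ${\rm pr}$, and I expect it to be the only real obstacle; everything else is bookkeeping on top of Theorem \ref{sl2} and the decomposition \eqref{eq:On}.

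It also remains to record the normalization that makes this an $\mathfrak{sl}_2$-categorification and not merely a weak one: on each block $\mathcal O_{i,n-i}$ the natural transformation $X$ on $\mathcal E$ has a single generalized eigenvalue, because the generalized eigenvalues of $\Omega$ on $M\otimes V$ are controlled by the central characters occurring in $M\otimes V$ and ${\rm pr}_{i+1}$ selects a single one; after an affine rescaling this meets the normalization required in \cite{CR}. With the biadjunction of Theorem \ref{sl2}(1), the integrable $\mathfrak{sl}_2$-module $G(\mathcal O_n)$ together with its weight decomposition, and the pair $(X,T)$ generating the degenerate affine Hecke action, all the axioms hold, so $\mathcal O_n$ is an $\mathfrak{sl}_2$-categorification in the sense of Chuang--Rouquier.
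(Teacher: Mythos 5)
Your proposal is correct and matches the argument of the source the paper cites: the paper gives no proof of its own beyond the reference to \cite[\S 7.4.3]{CR}, and your construction of $X$ from $\Omega=\sum_{p,q}e_{pq}\otimes e_{qp}$ acting on $M\otimes L(e_1)$ and of $T$ from the flip on the two tensor factors, together with the observation that the block projections isolate a single generalized eigenvalue of $X$, is exactly the construction carried out there on top of Theorem \ref{sl2} and the decomposition \eqref{eq:On}. Nothing further is needed.
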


\vskip 5mm

%%%%%%%%%%%%%%%%%%%%%%%%%%%%%%%%%%%%%%%%%%%%%%%%%%%%%%%%%%%%%%%%%%%%%%%%%%%%%%%%%%

\section{$\mathfrak{gl}_2$-crystal structure on ${\rm Irr} (\mathcal
O_n)$}

In this section, we will discuss the $\mathfrak{gl}_2$-crystal
structure on ${\rm Irr} (\mathcal O_n)$, the set of isomorphism
classes of simple objects in ${\mathcal O_n}$. We first recall the
definition of  $\mathfrak{gl}_{2}$-crystal.
For details, see for example, \cite{HK2002}.

Set $P:=\Z e_1 \oplus \Z e_2$ and $\alpha_1:=e_1-e_2$.
 Let $(k_1, k_2)$ be the  basis of $P^*$
which is  dual to $(e_1,e_2)$.
The natural pairing $P^* \times P \to \Z$ is denoted by
$\langle \, , \, \rangle$.

\begin{definition}
{\rm  An {\em (abstract) $\mathfrak{gl}_2$-crystal} is a set $B$
together with the maps $\te, \tf \cl B \to B \sqcup \{0\}$, $\vphi,
\eps \cl B \to \Z \sqcup \{-\infty\}$, and $\wt\cl B \to P$
satisfying the following conditions (see \cite{Kas93}):

\begin{itemize}
\item[(i)] $\wt(\te b) = \wt b + \alpha_1$ if $\te b \neq 0$,

\item[(ii)] $\wt(\tf b) = \wt b - \alpha_1$ if $\tf b \neq 0$,

\item[(iii)] for any $b\in B$, $\vphi(b) = \eps(b) +$
$\langle k_1-k_2, \wt b\rangle$,

\item[(iv)] for any $b,b'\in B$,
$\tf b = b'$ if and only if $b = \te b'$,

\item[(v)] for any $b\in B$
such that $\te b \neq 0$, we have  $\eps(\te b) = \eps(b) - 1$,
$\vphi(\te b) = \vphi(b) + 1$,

\item[(vi)] for any $b\in B$ such that $\tf b \neq 0$,
we have $\eps(\tf b) = \eps(b) + 1$, $\vphi(\tf b) = \vphi(b) - 1$,

\item[(vii)] for any $b\in B$ such that $\vphi(b) = -\infty$, we
have $\te b = \tf b = 0$.
\end{itemize}
}
\end{definition}
\vskip 3mm

For each object $S \in \ \mathcal O_n $, set
\begin{align*}
\vphi(S) \seteq \max \set{m \in \Z_{\ge 0}}{\mathcal F^m (S)\neq 0}, \quad
\varepsilon(S) \seteq \max \set{m \in \Z_{\ge 0}}{\mathcal E^m (S)\neq 0}.
\end{align*}

The  $\mathfrak{sl}_2$-categorification on $\mathcal O_n$ has the
following nice properties.

\begin{prop} \rm (\cite[Proposition 5.20]{CR}, \cite[Proposition
2.3]{Losev}) \hfill

 Let $S$ be a simple object in $\mathcal O_n$
with $\varepsilon(S) \neq 0$ (respectively, $\vphi(S) \neq 0$).
  \begin{enumerate}
    \item The object $\mathcal E (S) $ (respectively, $\mathcal F(S)$) has simple socle
    and simple head, and they are isomorphic to each other.
     \item For any other subquotient $S'$ of $\mathcal E (S) $ (respectively, $\mathcal F(S)$),
     we have  $\varepsilon(S') \le \varepsilon(S)-1$ (respectively, $\vphi(S') \le \vphi(S)-1$).
  \end{enumerate}
\end{prop}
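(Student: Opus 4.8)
The plan is to deduce both statements from the general structure theory of $\mathfrak{sl}_2$-categorifications, specifically from the properties of the divided-power functors $\mathcal{E}^{(a)}$ and $\mathcal{F}^{(a)}$ and the analysis of socles and heads of $\mathcal{E}\,\mathcal{E}^{(a)}$, which is exactly the content of Chuang--Rouquier \cite[Proposition 5.20]{CR}. So strictly speaking the cleanest route is to observe that $\mathcal{O}_n$ \emph{is} an $\mathfrak{sl}_2$-categorification (the second theorem quoted above, from \cite[\S 7.4.3]{CR}), and then cite \cite[Proposition 5.20]{CR} and \cite[Proposition 2.3]{Losev} verbatim. But since the statement asks for a proof, I would reproduce the key argument rather than merely cite.

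First I would set $a := \varepsilon(S)$, so that $\mathcal{E}^a(S) \neq 0$ but $\mathcal{E}^{a+1}(S) = 0$; by the s$\mathfrak{l}_2$-theory $S$ lies in the image of $\mathcal{E}^{(a)}$ applied to a simple object $T$ with $\varepsilon(T) = 0$ (a ``highest weight'' simple), and in fact $S$ is the simple head, and also the simple socle, of $\mathcal{E}^{(a)} T$. The crucial input is that the functor $\mathcal{E}$ is self-biadjoint up to shift on each block (this follows from Theorem \ref{sl2}(1), the biadjointness of $(\mathcal{E},\mathcal{F})$, together with the weight-space bookkeeping), hence $\mathcal{E}$ sends projectives to projectives and injectives to injectives, and in particular takes an object with simple socle and simple head to one whose socle and head are controlled. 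The standard computation gives a short exact sequence relating $\mathcal{E}\,\mathcal{E}^{(a)}$ to $\mathcal{E}^{(a+1)}$ and $\mathcal{E}^{(a-1)}$ (the categorified Serre-type relation / the decomposition of $\mathcal{E} \circ \mathcal{E}^{(a)} \cong \mathcal{E}^{(a+1)} \oplus \ldots$ up to filtration); applying this to $T$ and using $\varepsilon(T)=0$ forces all composition factors of $\mathcal{E}(S) = \mathcal{E}\,\mathcal{E}^{(a)} T$ other than the head/socle copy to have $\varepsilon$ strictly smaller. This is where one gets (2): any other subquotient $S'$ of $\mathcal{E}(S)$ satisfies $\varepsilon(S') \le \varepsilon(S) - 1$, because it is ``killed one step sooner'' by further application of $\mathcal{E}$.

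For part (1): the fact that $\mathcal{E}(S)$ has simple socle and simple head isomorphic to each other follows from biadjointness. Indeed $\Hom(\mathcal{E}(S), -) \cong \Hom(S, \mathcal{F}(-))$ and $\Hom(-, \mathcal{E}(S)) \cong \Hom(\mathcal{F}(-), S)$ (up to the block shift), and since $S$ is simple, $\mathcal{E}(S)$ has a one-dimensional space of maps to/from each simple in a way that pins down socle and head to be simple; that they coincide uses that on a singular block of category $\mathcal{O}$ the relevant translation functors are self-dual (the duality $M \mapsto M^*$ on $\mathcal{O}_n$ commutes with $\mathcal{E}$, sending the natural module $L(e_1)$ issue aside, up to the combinatorics of blocks), so $\mathcal{E}(S) \cong (\mathcal{E}(S))^{*}$ when $S \cong S^*$, forcing socle $\cong$ head. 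The statement for $\mathcal{F}$ with $\varphi(S)\neq 0$ is obtained by the symmetric argument, swapping the roles of $\mathcal{E}$ and $\mathcal{F}$ and of $\varepsilon$ and $\varphi$, which is legitimate because $(\mathcal{E},\mathcal{F})$ is a biadjoint pair and the whole setup is symmetric under $\mathcal{E} \leftrightarrow \mathcal{F}$.

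The main obstacle I anticipate is establishing the precise short exact sequence / filtration controlling the subquotients of $\mathcal{E}(S)$ other than the simple head --- i.e.\ turning the qualitative ``$\mathfrak{sl}_2$-categorification'' statement into the quantitative bound $\varepsilon(S') \le \varepsilon(S)-1$. In \cite{CR} this is handled by a somewhat delicate induction on $\varepsilon$ using the divided powers and the sl$_2$-commutation relations at the categorical level; reproducing it in full would essentially mean re-deriving \cite[Proposition 5.20]{CR}. For the purposes of this paper it is therefore most efficient to cite \cite{CR} and \cite{Losev} directly, as done, and only sketch the biadjointness argument for the socle/head part, since that is the piece that is short and transparent in our concrete setting $\mathcal{O}_n$.
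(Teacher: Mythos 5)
The paper itself gives no proof of this proposition: it is quoted verbatim from \cite[Proposition 5.20]{CR} and \cite[Proposition 2.3]{Losev}, the only input being that $\mathcal O_n$ is an $\mathfrak{sl}_2$-categorification (the theorem from \cite[\S 7.4.3]{CR} recorded just before it). Your bottom-line recommendation --- invoke the categorification structure and cite these two results --- is therefore exactly the paper's route.

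That said, the supplementary sketch you offer has two concrete problems. First, a direction slip: if $\varepsilon(T)=0$ then $\mathcal E(T)=0$, so $\mathcal E^{(a)}T=0$ for $a\ge 1$; you want $S$ to be the head and socle of $\mathcal F^{(a)}T$ for $T=\tilde e^{\,a}(S)$ (equivalently, $T$ is the head/socle of $\mathcal E^{(a)}(S)$). Second, and more seriously, simplicity of the head and socle of $\mathcal E(S)$ does \emph{not} follow from biadjointness alone. Adjunction gives $\Hom(\mathcal E(S),S')\cong\Hom(S,\mathcal F(S'))$, whose dimension is the multiplicity of $S$ in the socle of $\mathcal F(S')$; nothing in this formula bounds the number of simples $S'$ with nonzero Hom, nor forces the relevant multiplicity to be one. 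The actual proof in \cite{CR} needs the finer structure of an $\mathfrak{sl}_2$-categorification --- the (degenerate affine Hecke/nilHecke) action on powers of $\mathcal E$, the divided-power decomposition $\mathcal E^m\cong(\mathcal E^{(m)})^{\oplus m!}$, and an induction on $\varepsilon$ --- which is precisely the machinery you defer. Your duality argument for socle $\cong$ head (self-duality of $\mathcal E(S)$ under the standard duality of $\mathcal O$, which fixes simples and swaps socle with head) is sound in this concrete setting, but it only applies once simplicity of socle and head is already established. So as a pure citation your proposal coincides with the paper; as an argument, part (1) still rests on the Chuang--Rouquier input you have not reproduced.
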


\begin{remark}
In \cite{BrSt}, Brundan and Stroppel investigated  intensively
 the parabolic analogue of  the BGG category $\mathcal O$ associated with the subalgebra $\gl_m \oplus \gl_n$ of $\gl_{m+n}$.
They showed that a sum  ${\mathcal O}(m,n,\Z_{\ge 1})$ of its integral blocks forms an integrable representation of the 2-Kac-Moody algebra ${\mathfrak U}(\mathfrak{sl}_{\Z_{\ge 1}})$ in the sense of \cite{R} (\cite[Remark 5.7]{BrSt}).
 Moreover, in that case, they described the crystal structure on the set of simple objects of ${\mathcal O}(m,n,\Z_{\ge 1})$ in an explicit form (\cite[(3.15)]{BrSt}) and derived an analogue of the above proposition explicitly (\cite[Lemma 4.9]{BrSt}).
 % Note that the parabolic analogue is a proper subcategory of the BGG category $\mathcal O$ so that their result can't be applied to our setting directly.
 \end{remark}

For a simple object $S$ in ${\mathcal O}_{n}$, let $\mathsf{wt}([S])
\in \Z$ be the $\mathfrak{sl}_{2}$-weight of $[S]$ in  $G(\mathcal
O_n)$.
Define
\begin{align*}
\te ([S]) := [{\rm hd}\, \mathcal E(S)], \quad \tf ([S]) := [{\rm
hd}\, \mathcal F(S)].
\end{align*}
Since the head and socle of $\mathcal{E}(S)$ are isomorphic, we may
define $\te([S]) = [{\rm soc}({\mathcal E}(S))]$ and similarly for
$\tf([S])$.

Then $\big({\rm Irr} (\mathcal O_n), \mathsf{wt}, \vphi,
\varepsilon, \te, \tf \big)$ becomes an $\mathfrak{sl}_2$-crystal
(see the last paragraph of \cite[\S 2.4]{Losev}). For example, if
${\rm hd} \, \mathcal E(S) \cong S'$, then we have
$$0 \neq \Hom_{\mathcal O_n}(\mathcal E(S), S') \cong \Hom_{\mathcal O_n}( S, \mathcal F(S')). $$
Thus $S$ is a simple submodule of $ \mathcal F(S')$ so that we have
$S \cong {\rm soc}\,  \mathcal F(S')$ by the above proposition. That
is, if $\te([S])=[S']$, then $[S]=\tf([S'])$ as desired.

Note that, by the $U(\mathfrak{sl}_{2})$-module isomorphism in
Theorem \ref{sl2}(4), the $\mathfrak{sl}_{2}$-weight of $[L(a_1,
\ldots, a_n)]$ is given by
$$\mathsf{wt}([L(a_1, \ldots, a_n)])=\sharp \{ i \; | \; a_i=2 \} - \sharp \{ i \; | \; a_i=1 \}.$$
Hence by setting
$$\wt ([ L(a_1,\ldots,a_n) ]) := (\sharp \{ i \; | \; a_i=2 \}) e_1 + (\sharp \{ i \; | \; a_i=1 \}) e_2,$$
$\big({\rm Irr} (\mathcal O_n),\wt, \vphi, \varepsilon, \te, \tf
\big)$ becomes a $\mathfrak{gl}_2$-crystal.

\vskip 3mm

Let $\B=\{b_1, b_2\}$ be the $\mathfrak{sl}_2$-crystal of $\V$. By
defining $\text{wt}(b_1)=e_1$, $\text{wt}(b_2)=e_2$, $\B$ becomes a
$\mathfrak{gl}_{2}$-crystal.
 Recall that the {\em tensor product rule for $\mathfrak{gl}_2$-crystals} gives a  $\mathfrak{gl}_2$-crystal structure on $\B^{\otimes n}= \B \times \cdots \times \B$ (see, for example,  \eqref{eq1:tensor product}).  The following theorem describes
the $\mathfrak{gl}_{2}$-crystal structure on
$\text{Irr}(\mathcal O_n)$.

\begin{theorem}  \rm  \label{thm:gl2-crystal} (\cite[Theorem 4.4]{BrKl})

As a $\mathfrak{gl}_2$-crystal,  $\big({\rm Irr} (\mathcal O_n),\wt,
\vphi, \varepsilon, \te, \tf \big)$ is isomorphic to $\B^{\otimes
n}$  under the map
$$[L(a_1, \ldots, a_n) ] \mapsto b_{a'_1} \otimes \cdots \otimes b_{a'_n}.$$
\end{theorem}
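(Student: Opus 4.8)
The plan is to verify directly that $\Phi\colon[L(a_1,\ldots,a_n)]\mapsto b_{a'_1}\otimes\cdots\otimes b_{a'_n}$ is an isomorphism of $\mathfrak{gl}_2$-crystals. First, $\Phi$ is a bijection of underlying sets: ${\rm Irr}(\mathcal O_n)=\{[L(a)]:a\in\{1,2\}^n\}$, the $[L(a)]$ are pairwise distinct because they have distinct highest weights, and $a\mapsto a'$ is a bijection of $\{1,2\}^n$. It preserves the $P$-valued weight, since $b_{a'_j}=b_1$ exactly when $a_j=2$ gives $\wt(b_{a'_1}\otimes\cdots\otimes b_{a'_n})=(\sharp\{i:a_i=2\})e_1+(\sharp\{i:a_i=1\})e_2=\wt(L(a))$. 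Finally, $\big({\rm Irr}(\mathcal O_n),\wt,\vphi,\varepsilon,\te,\tf\big)$ and $\B^{\otimes n}$ are both normal crystals — for the former because $\varepsilon,\vphi$ are defined from $\mathcal E,\mathcal F$ and the crystal axioms then force $\varepsilon(S)=\max\{m:\te^m[S]\neq0\}$ (and likewise for $\vphi$); for the latter by the standard tensor-product computation. Hence, once $\wt$ is matched, it suffices to prove that $\Phi$ commutes with $\te$ and that $\varepsilon(L(a))$ equals the value of $\varepsilon$ at $b_{a'_1}\otimes\cdots\otimes b_{a'_n}$: then $\Phi$ commutes with $\tf$ automatically (axiom (iv) together with bijectivity of $\Phi$), and $\vphi$ is determined by $\wt$ and $\varepsilon$ via axiom (iii).

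To get at $\te[L(a)]=[{\rm hd}\,\mathcal E(L(a))]$ I would transport the problem through the $U(\mathfrak{sl}_2)$-module isomorphism $\Upsilon\colon G(\mathcal O_n)\isoto\V^{\otimes n}$ of Theorem \ref{sl2}, under which $[\mathcal E]\mapsto E$, $[\mathcal F]\mapsto F$ and $[M(a)]\mapsto v_{a'_1}\otimes\cdots\otimes v_{a'_n}$, the monomial basis. Passing to a Koszul graded lift of $\mathcal O_n$ — so that $G(\mathcal O_n)$ becomes $\V_q^{\otimes n}$ over $\Z[q,q^{-1}]$, with $[M(a)]$ the monomial basis and $[L(a)]$ the canonical (Kazhdan--Lusztig) basis vector indexed by $a'$; cf. \cite{FKS},\cite{Str} — the unitriangular relation $[L(a)]=[M(a)]+(\text{lower monomials, with positive powers of }q)$ shows that $[L(a)]$ and $[M(a)]$ reduce modulo $q$ to the same crystal-basis vector $b_{a'_1}\otimes\cdots\otimes b_{a'_n}$ of $\V_q^{\otimes n}$, and the crystal of the latter is, by Kashiwara's theory, precisely $\B^{\otimes n}$ with the tensor-product rule. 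The one substantial input I would then use is the sharp compatibility of the canonical basis with the Chevalley generators (Lusztig, Kashiwara):
\[
E\,G(b)=c_b(q)\,G(\te b)+\!\!\sum_{b':\,\varepsilon(b')\le\varepsilon(b)-2}\!\!c_{b,b'}(q)\,G(b'),\qquad c_b(q)\neq0,
\]
where $\varepsilon$ now denotes the combinatorial function of $\B^{\otimes n}$, and symmetrically for $F$.

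The last step is an induction on $k:=\varepsilon(b_{a'_1}\otimes\cdots\otimes b_{a'_n})$, proving simultaneously that $\varepsilon(L(a))=k$ and $\te[L(a)]=G(\te(a'))$ (so that $\Phi$ commutes with $\te$). For $k=0$ the displayed formula gives $E\,G(a')=0$, hence $[\mathcal E(L(a))]=0$ in $G(\mathcal O_n)$, hence $\mathcal E(L(a))=0$ and both sides vanish. For $k>0$ the formula expands $[\mathcal E(L(a))]$ in the basis $\{[L(b)]\}$: the vector $G(\te(a'))$ occurs with coefficient $c_{a'}(q)\neq0$, and every other occurring vector $G(b')$ has $\varepsilon(b')\le k-2$; by the inductive hypothesis the corresponding simple modules have $\varepsilon$-value exactly $\varepsilon(b')$ (all $<k$), so the constituents of $\mathcal E(L(a))\neq0$ are the simple $L^{(1)}$ corresponding to $G(\te(a'))$, with $\varepsilon(L^{(1)})=k-1$, together with constituents of $\varepsilon\le k-2$. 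On the other hand \cite[Proposition~5.20]{CR} (see also \cite[Proposition~2.3]{Losev}) says $\mathcal E(L(a))$ has isomorphic simple head and socle, that this head is $\te[L(a)]$ with $\varepsilon(\te[L(a)])=\varepsilon(L(a))-1$, and that every constituent has $\varepsilon\le\varepsilon(L(a))-1$. Applying that bound to $L^{(1)}$ gives $k-1\le\varepsilon(L(a))-1$, and applying the constituent list to the head gives $\varepsilon(L(a))-1\le k-1$; hence $\varepsilon(L(a))=k$, and then $\te[L(a)]$, being a constituent of $\varepsilon$-value $k-1$, must be $L^{(1)}=G(\te(a'))$. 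This closes the induction, and by the first paragraph the theorem follows.

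The step I expect to be the real obstacle is the middle one: identifying, through a graded lift, the classes of the simple modules with the canonical basis of $\V_q^{\otimes n}$ — equivalently, knowing that the composition multiplicities of the singular blocks $\mathcal O_{i,n-i}$ are the expected parabolic Kazhdan--Lusztig numbers, which is where \cite{BFK} and \cite{FKS} come in — together with pinning down that the head of $\mathcal E(L(a))$ realizes the leading term of $E\,G(a')$ rather than some lower constituent. One could instead avoid all $q$-deformation and compute $\mathcal E(L(a))$ and $\mathcal F(L(a))$ directly from the combinatorics of translation functors across these singular walls; this is more elementary but combinatorially heavier. The remaining steps are routine manipulations with the crystal axioms.
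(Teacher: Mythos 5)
This theorem is imported, not proved: the paper quotes it verbatim from \cite[Theorem 4.4]{BrKl}, and the only ``proof content'' it supplies is the remark translating conventions (their $e_1,f_1$ are our $\mathcal F,\mathcal E$, and they use the opposite tensor product order). Your proposal is therefore not competing with an argument in the text but reconstructing the one behind the citation, and as a reconstruction it is sound. The reduction to matching $\wt$, $\te$ and $\varepsilon$ is legitimate because both crystals are seminormal (for ${\rm Irr}(\mathcal O_n)$ this uses the Chuang--Rouquier/Losev facts the paper already quotes as Proposition 2.2); the bound $\varepsilon(S')\le\varepsilon(S)-1$ for \emph{every} constituent of $\mathcal E(S)$, including the head, in fact follows from exactness alone, since $\mathcal E^{\varepsilon(S)}(\mathcal E(S))=\mathcal E^{\varepsilon(S)+1}(S)=0$; and with the leading-term expansion in hand your induction does pin down ${\rm hd}\,\mathcal E(L(a_1,\ldots,a_n))$ correctly.

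Two points of precision. First, the basis realizing the simple classes is the \emph{dual} canonical (upper global) basis, not the canonical basis: the unitriangularity $[L]=[M]+q(\cdots)$ over the Verma classes and, crucially, the expansion $E\,G(b)=[\varepsilon(b)]_q\,G(\te b)+\sum_{b'}c_{b,b'}(q)\,G(b')$ with the error terms supported on $\varepsilon(b')\le\varepsilon(b)-2$ are properties of the dual basis --- for the canonical basis the inequality on $\varepsilon$ goes the other way --- so your wording ``canonical (Kazhdan--Lusztig) basis'' should be tightened. Second, the input you flag as the real obstacle is exactly what this paper imports as Theorem \ref{thm:upperglobalbasis} from \cite{BFK,FKK} (see also the remark after it identifying $G(\mathbf{a'})$ with the dual canonical basis element), so within the paper it is available as a black box; and no Koszul graded lift is actually needed, since the $q=1$ specialization of the perfect-basis expansion already has nonzero leading coefficient $\varepsilon(b)>0$, which is all your induction uses. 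What your route buys over the bare citation is a self-contained derivation from ingredients the paper already quotes (Theorems \ref{sl2} and \ref{thm:upperglobalbasis}, Proposition 2.2); what it costs is the perfect-basis property of the dual canonical basis, a genuine extra input not stated anywhere in the paper.
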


\begin{remark}
Note that the functors $e_1$ and $f_1$ defined in \cite{BrKl}
correspond to our functors  $\mathcal F$ and $\mathcal E$,
respectively.
If we take the full subgraph of the $\mathfrak{gl_\infty}$-crystal $\Z^n$ given in \cite{BrKl} with vertices   $\widetilde \B:=\set{(a_1, \ldots, a_n)}{a_i \in \{1,2\} } \subset \Z^n$, then $\widetilde \B$  can be regarded as a $\mathfrak{gl}_2$-crystal in a natural way.
Remark that in \cite{BrKl}, the opposite tensor product rule for
$\mathfrak{gl}_2$-crystals was used.
The map $\psi : (a_1, \ldots, a_n) \mapsto  (b_{a'_1}, \ldots, b_{a'_n}) $  becomes a bijection
between $\widetilde \B$ and $\B$ satisfying $\psi(\tf_1(a_1, \ldots, a_n)) = \te(\psi(a_1, \ldots, a_n)) $ and $\psi(\te_1(a_1, \ldots, a_n)) = \tf(\psi(a_1, \ldots, a_n)) $.
\end{remark}
\vskip 5mm

%%%%%%%%%%%%%%%%%%%%%%%%%%%%%%%%%%%%%%%%%%%%%%%%%%%%%%%%%%%%%%%%%%%%%%%%%%%%%%%%%%%%%%%%%%%%%%%%%%%%%%%

\section{$\mathfrak{q} (2)$-crystals} \label{sec:q2crystal}

In this section we recall  the definition of  $\mathfrak{q}
(2)$-crystal and provide a description of the connected components
of $\B^{\otimes n}$ as $\mathfrak{q} (2)$-crystals. The notion of
{\it abstract $\mathfrak{q}(n)$-crystal} and the {\it queer tensor
product rule} are introduced in \cite{GJKKK, GJKKK2, GJKKK3}. In
this paper, we consider $\mathfrak{q}(2)$-crystals only.

Recall that $\mathfrak{q} (n)$ is the Lie subalgebra of the general linear Lie superalgebra  $\mathfrak{gl} (n|n)$ over $\C$ consisting of all matrices of the form $\left( \begin{matrix}  A & B \\ B & A  \end{matrix}  \right)$. The even part of $\mathfrak{q}  (n)$ is naturally isomorphic to $\mathfrak{gl} _n$. The structure theory of  $\mathfrak{q} (n)$ is rather different from the one of the other classical Lie superalgebras. For more details on the properties of $\mathfrak{q} (n)$ we refer the reader for example to \cite{GJKK, Ol, Ser}.

\begin{definition}
{\rm An {\em $\mathfrak{q} (2)$-crystal} is  a
$\mathfrak{gl}_2$-crystal together with the maps $\teone, \tfone\cl
B \to B \sqcup \{0\}$ satisfying the following conditions:

\begin{itemize}
\item[(i)] $\wt(B)\subset P^{\ge0} \seteq \Z_{\ge 0} e_1 \oplus \Z_{\ge 0} e_2$,

\item[(ii)]  $\wt(\teone b) = \wt(b) + \alpha_1$, $\wt(\tfone b) = \wt(b) -
\alpha_1$,

\item[(iii)]  for all $b, b' \in B$, $\tfone b = b'$ if and only if $b = \teone b'$.
\end{itemize}
}
\end{definition}
Note that in  \cite{GJKKK, GJKKK2}, the $\mathfrak{gl}_2$-crystals satisfying the above conditions are called {\it abstract} $\mathfrak{q}(2)$-crystals. In this paper, we simply call them $\mathfrak q(2)$-crystals.

Let  $B$ be a $\mathfrak{q}(2)$-crystal (respectively, a $\mathfrak{gl}_2$-crystal) and let $B'$ be a subset of $B$.
We say that  $B'$ is a {\it $\mathfrak{q}(2)$-subcrystal} (respectively, {\it $\mathfrak{gl}_2$-subcrystal}) of $B$, if   $x(b) \in B'\sqcup\{0\}$ for every $b \in B'$ and $x = \te,\tf, \teone, \tfone$ (respectively, $x=\te,\tf$).

The queer tensor product rule is given in the following theorem.

\Th  \rm  \cite{GJKKK, GJKKK2, GJKKK3} \hfill

Let $B_1$ and $B_2$ be  $\mathfrak{q} (2)$-crystals. Define
the {\em tensor product} $B_1 \otimes B_2$ of $B_1$ and $B_2$ to be
$(B_1 \times B_2, \wt, \vphi, \varepsilon, \te,\tf, \teone, \tfone
)$, where
\begin{align*}
&\wt(b_1\otimes b_2) = \wt(b_1) +\wt(b_2), \\
&\varepsilon(b_1\otimes b_2) =
\max \{ \varepsilon(b_1) - \vphi(b_1) + \varepsilon(b_2), \ \varepsilon(b_1)\}, \\
&\vphi(b_1\otimes b_2) =
\max \{ \vphi(b_1) - \eps(b_2)+\vphi(b_2), \ \vphi(b_2)\},
\end{align*}
and
\begin{equation} \label{eq1:tensor product}
\begin{aligned}
\te (b_1 \otimes b_2) & = \begin{cases} \te b_1 \otimes b_2 \ &
\text{if} \ \vphi(b_1) \ge \eps(b_2), \\
b_1 \otimes \te b_2 \ & \text{if} \ \vphi(b_1) < \eps(b_2),
\end{cases} \\
\tf (b_1 \otimes b_2) & = \begin{cases} \tf b_1 \otimes b_2 \
& \text{if} \  \vphi (b_1) > \eps (b_2), \\
b_1 \otimes \tf b_2 \ & \text{if} \ \vphi (b_1) \le \eps (b_2),
\end{cases}
\end{aligned}
\end{equation}

\begin{equation} \label{eq2:tensor product}
\begin{aligned}
\teone (b_1 \otimes b_2) & = \begin{cases} \teone b_1 \otimes b_2
& \text{if $\lan k_1, \wt b_2 \ran =\lan k_2, \wt b_2 \ran =0$,} \\
b_1 \otimes \teone b_2 &  \text{otherwise,}
\end{cases} \\
\tfone(b_1 \otimes b_2) & = \begin{cases} \tfone b_1 \otimes b_2
& \text{if $\lan k_1, \wt b_2 \ran = \lan k_2, \wt b_2 \ran =0$,}
 \\
b_1 \otimes \tfone b_2   %& \text{if} \ \tfone b_2 \neq 0, \\
%0 \
& \text{otherwise}.
\end{cases}
\end{aligned}
\end{equation}

 Then $B_1 \otimes B_2$  is
a $\q (2)$-crystal.
\enth

For a given  $\q (2)$-crystal, we draw an arrow $\xymatrix{b
\ar@{->}@<0.2ex>[r]^-{1} &b'}$ if and only if $\tf(b) =b' $ and draw
an arrow $\xymatrix{b \ar@{-->}@<0.2ex>[r]^-{\bar 1} &b'}$ if and
only if $\tfone(b) =b' $. The resulting oriented graph is called a
\emph{$\mathfrak{q}(2)$-crystal graph}.

 For a vertex $b$ in a  $\mathfrak q(2)$-crystal graph $B$, we denote by $C(b)$
the connected component of $b$ in $B$.
The connected component as a $\mathfrak{gl}_2$-crystal will be denoted by  $C_{\mathfrak{gl}_2}(b)$.

An element $b$ in a $\mathfrak q(2)$-crystal (respectively, $\mathfrak{gl}_2$-crystal) is called a {\it highest weight vector} (respectively, {\it $\mathfrak{gl}_2$ -highest weight vector}) if $\teone b=\te b=0$ (respectively, $\te b=0$).
If $\varphi(b)=0$  and $\te^{\eps(b)} b$ is a highest weight vector, then we call $b$ a {\it lowest weight vector}.

\begin{example} \label{ex:BtensorN} \hfill

\begin{enumerate}
\item Let $\mathbf{B}=\{ b_1, b_2   \}$ be the $\mathfrak{gl}_{2}$-crystal of $\V$.
Define
$$\teone(b_1)=0, \ \ \tfone(b_1)=b_2, \quad \teone(b_2)=b_1, \ \
\tfone(b_2)=0.$$ Then $\B$ is a $\mathfrak{q}(2)$-crystal
with $\mathfrak{q}(2)$-crystal graph
$$\xymatrix@C=5ex
{*+{1} \ar@<0.1ex>[r]^-{1} \ar@{-->}@<-0.9ex>[r]_{\ol 1} & *+{2} }$$
From now on, $b_1$ and $b_2$ are identified with $1$ and $2$, respectively.

\vskip 3mm

\item
By the queer tensor product rule, $\mathbf{B}^{\otimes  r }$ is a $\mathfrak{q} (2)$-crystal. The $\q (2)$-crystal structure
of $\B^{\otimes 4}$ is given below.

$$\hskip3em \xymatrix{ 1111\ar[d]_-{1} \ar@{-->}[dr]^{\ol 1} &  && & && & \\
2111\ar[d]_-{1} \ar@{-->}[dr]^{\ol 1}& 1112\ar[d]_-{1}&& 1121\ar[d]_-{1} \ar@{-->}[dr]^{\ol 1}&&& 1211\ar[d]_-{1} \ar@{-->}[dr]^{\ol 1}&\\
2211\ar[d]_-{1} \ar@{-->}[dr]^{\ol 1}&2112\ar[d]_-{1}&& 2121\ar@<-0.2ex>[d]_-{1} \ar@{-->}@<0.9ex>[d]^{\ol 1}& 1122&& 1221 \ar@<-0.2ex>[d]_-{1} \ar@{-->}@<0.9ex>[d]^{\ol 1} & 1212 \\
2221\ar@<-0.2ex>[d]_-{1} \ar@{-->}@<0.9ex>[d]^{\ol 1} &2212&& 2122 &&& 1222&\\
2222& && &&&& }$$
 Here we identify a sequence $a_1 \cdots a_r$ ($a_i \in \{1,2\}$) with the element $a_1 \otimes \cdots \otimes a_r \in \B^{\otimes r}$.
\vskip 3mm

\item
The connected component $C(22122122) \subset \B^{\otimes 8}$ is given below:

$$\xymatrix{
11121121 \ar[d]_-{1}  \ar@{-->}[dr]^{\ol 1} & \\
21121121\ar[d]_-{1}\ar@{-->}[dr]^{\ol 1} & 11121122 \ar[d]_-{1}\\
22121121\ar[d]_-{1}\ar@{-->}[dr]^{\ol 1}& 21121122 \ar[d]_-{1}\\
22122121\ar@<-0.2ex>[d]_-{1} \ar@{-->}@<0.9ex>[d]^{\ol 1}& 22121122 \\
22122122&
}$$

\end{enumerate}
\end{example}

\vskip 3mm

In Example \ref{ex:BtensorN}(2), we can observe the following decompositions of $\mathfrak{gl}_2$-crystals. %We omit its proof, since it is straightforward.
\begin{prop}  \rm \label{prop-easy-dec}

For $r \ge 2$, the connected component $C(1^r)$ in $\B^{\otimes r}$
 is decomposed into
$$C(1^r)=C_{\mathfrak{gl}_2}(1^r) \sqcup C_{\mathfrak{gl}_2}(1^{r-1} 2) \cong C_{\mathfrak{gl}_2}(1^{r-1}) \otimes \B_{\mathfrak{gl}_2},$$
as $\mathfrak{gl}_2$-crystals.
\end{prop}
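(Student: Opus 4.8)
The plan is to analyze the $\q(2)$-crystal graph structure on $C(1^r)$ explicitly and peel off the $\bar 1$-arrows. First I would determine exactly which vertices lie in $C(1^r)$. Starting from the highest weight vector $1^r$, the only operators that act nontrivially are $\tf$ (taking $1^r$ to $21^{r-1}$, shifting the first coordinate) and $\tfone$ (taking $1^r$ to $1^{r-1}2$, shifting the last coordinate, since for the tensor $1^{r-1}\otimes 1$ the rightmost tensor factor $b_r=1$ has $\lan k_1,\wt b_r\ran =1\neq 0$, so $\tfone$ acts on the last factor). Iterating, one checks by the tensor product rules \eqref{eq1:tensor product} and \eqref{eq2:tensor product} that $C(1^r)$ consists precisely of the sequences $2^a 1^{r-a}$ for $0\le a\le r$ together with the sequences $2^a 1^{r-1-a}2$ for $0\le a\le r-1$; that is, $C(1^r)$ is the union of two $\mathfrak{gl}_2$-strings, the $\mathfrak{gl}_2$-component $C_{\mathfrak{gl}_2}(1^r)=\{2^a1^{r-a}\}$ and the $\mathfrak{gl}_2$-component $C_{\mathfrak{gl}_2}(1^{r-1}2)=\{2^a1^{r-1-a}2\}$, linked by $\bar 1$-arrows.

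Next I would verify that these two sets are indeed full $\mathfrak{gl}_2$-subcrystals, i.e. that $\te,\tf$ preserve each of them, and compute that they are connected by $\bar 1$-arrows exactly as in the displayed examples: $\tfone(2^a1^{r-a})=2^a1^{r-1-a}2$ for $0\le a\le r-1$ (here the last tensor factor is $1$ when $a\le r-1$, so $\tfone$ hits the last slot), and $\tfone(2^r)=0$, $\teone$ reversing these. This shows $C(1^r)=C_{\mathfrak{gl}_2}(1^r)\sqcup C_{\mathfrak{gl}_2}(1^{r-1}2)$ as a decomposition into $\mathfrak{gl}_2$-subcrystals. For the isomorphism with $C_{\mathfrak{gl}_2}(1^{r-1})\otimes\B_{\mathfrak{gl}_2}$, I would use the tensor product rule for $\mathfrak{gl}_2$-crystals: $C_{\mathfrak{gl}_2}(1^{r-1})=\{2^a1^{r-1-a}:0\le a\le r-1\}$ is a single string of length $r$, and tensoring on the right with $\B_{\mathfrak{gl}_2}=\{1,2\}$ yields, by the tensor product rule, exactly a crystal with underlying set $\{2^a1^{r-1-a}\otimes 1\}\sqcup\{2^a1^{r-1-a}\otimes 2\}$ whose two $\mathfrak{gl}_2$-components have the correct lengths $r+1$ and $r-1$ (since $\varphi$ of the left factor ranges over $0,\dots,r-1$ and $\eps$ of the right factor is $0$ or $1$). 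Matching $2^a1^{r-a}\leftrightarrow 2^a1^{r-1-a}\otimes 1$ and $2^a1^{r-1-a}2\leftrightarrow 2^a1^{r-1-a}\otimes 2$ gives a weight-preserving bijection, and one checks directly that it intertwines $\te,\tf$ using the tensor product rule on the right-hand side and the computed action on the left.

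The bookkeeping step I expect to be the most delicate is confirming that no other vertices enter $C(1^r)$ — in particular that applying $\tfone$ after some $\tf$'s never opens a new branch (e.g. that $\tfone(2^a1^{r-a})$ modifies only the final coordinate and not an interior one, which hinges on the condition $\lan k_1,\wt b_2\ran=\lan k_2,\wt b_2\ran=0$ in \eqref{eq2:tensor product} being satisfied only when the relevant suffix is empty). This is a finite but slightly fiddly induction on $r$ peeling off the last tensor factor. Everything else is a routine comparison of the two tensor product rules against the single-string case, for which the $r=4$ picture in Example \ref{ex:BtensorN}(2) already serves as the model.
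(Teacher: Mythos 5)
Your proposal is correct and follows essentially the same route as the paper: a direct computation with the tensor product rules showing that $C(1^r)$ is exactly the union of the two explicit $\mathfrak{gl}_2$-strings $\{2^a1^{r-a}\}$ and $\{2^a1^{r-1-a}2\}$ linked by $\bar 1$-arrows, followed by a routine identification with $C_{\mathfrak{gl}_2}(1^{r-1})\otimes\B$. The only cosmetic differences are that the paper bounds $C(1^r)$ from above via the identity $\tfone\tf^x\tfone b=0$ together with uniqueness of the highest weight vector, where you verify closure of the candidate set directly, and that it identifies the two $\mathfrak{gl}_2$-components of $C_{\mathfrak{gl}_2}(1^{r-1})\otimes\B$ by counting highest weight vectors (lattice permutations) rather than by string lengths; the point you flag as delicate is handled in the paper by the observation that $\tfone,\teone$ always act on the last tensor factor, exactly as you anticipate.
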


\begin{proof}
 Let $b \in \B^{\otimes r}$. It is not difficult to see that $\tfone \tf^x \tfone b =0$ for all $x \in \Z_{ \ge 0}$.
Note that $1^r$ is the only vector in $C(1^r)$ annihilated by  $\te$ and $\teone$ by \cite[Theorem 4.6(b)]{GJKKK2}.
Hence, an element of $C(1^r) \sqcup \{0\}$ is one of the form
$$\tf^x(1^r), \  \tf^y \tfone \tf^x(1^r),\ \ (x,y \in \Z_{\ge 0}).  $$
Clearly, $\tf^x(1^r) \in C_{\mathfrak{gl}_2}(1^r) \sqcup \{0\}$ and
$\tf^x\tfone(1^r) \in C_{\mathfrak{gl}_2}(1^{r-1} 2) \sqcup \{0\}$.
By direct calculations, we have

\begin{align*}
  \tfone \tf^x(1^r)=\begin{cases}
    2^x 1^{r-1-x} 2  = \tf^x \tfone(1^r)   \in C_{\mathfrak{gl}_2}(1^{r-1}2) \ \ &\text{if} \ \  0 \le x \le r-2,\\
    2^x 2 = \tf^r (1^r)   \in C_{\mathfrak{gl}_2}(1^{r}) \ \ &\text{if} \ \ x=r-1,\\
    0 \ \   &\text{otherwise}.
  \end{cases}
\end{align*}
Then it is clear that
 $\tf^y \tfone \tf^x(1^r) \in C_{\mathfrak{gl}_2}(1^r) \sqcup C_{\mathfrak{gl}_2}(1^{r-1} 2) \sqcup \{0\} $.
Hence,
$C(1^r) \subseteq C_{\mathfrak{gl}_2}(1^r) \ \sqcup \ C_{\mathfrak{gl}_2}(1^{r-1} 2).$
 Since $1^r, 1^{r-1}2=\tfone(1^r) \in C(1^r),$
 it follows that $C_{\mathfrak{gl}_2}(1^r) \sqcup C_{\mathfrak{gl}_2}(1^{r-1} 2)  = C(1^r)$.

Now we show $C_{\mathfrak{gl}_2}(1^r) \sqcup
C_{\mathfrak{gl}_2}(1^{r-1} 2) \cong C_{\mathfrak{gl}_2}(1^{r-1})
\otimes \B_{\mathfrak{gl}_2}$. We can regard
$C_{\mathfrak{gl}_2}(1^{r-1}) \otimes \B_{\mathfrak{gl}_2}$ as a
$\gl_2$-subcrystal of $\B_{\gl_2}^{\otimes r}$. Note that $b$ is a
$\gl_2$-highest weight vector in $\B_{\gl_2}^{\otimes r}$ if and
only if $b$ is a lattice permutation. Since
$C_{\gl_2}(1^{r-1})=\set{ 2^x 1^{r-1-x}}{ 0 \le x \le r-1},$ there
are only two $\gl_2$-highest weight vectors   in $C_{\gl_2}(1^{r-1})
\otimes \B_{\gl_2}$; $1^r$ and $1^{r-1}2$. Hence,
$C_{\mathfrak{gl}_2}(1^{r-1}) \otimes \B_{\mathfrak{gl}_2} =
C_{\mathfrak{gl}_2}(1^r) \sqcup C_{\mathfrak{gl}_2}(1^{r-1} 2).$
\end{proof}

\vskip 3mm

Recall that a finite sequence of positive integers $x=x_1 \cdots
x_N$ is called a {\em strict reverse lattice permutation} if for $1
\le k \le N$ and $2 \le i \le n$, the number of occurrences of $i$
is strictly greater than the number of occurrences of $i-1$ in $x_k
\cdots x_N$ as  long  as $i-1$ appears in $x_k  \cdots  x_N$
\cite{GJKKK3}.

\begin{prop}  \rm (\cite{GJKKK3}) \label{cor_strict reverse lattice permutation}
 An element $b_1 \otimes \cdots \otimes b_N \in \B^{\otimes N}$ is a
lowest weight vector if and only if it is a strict reverse lattice
permutation.
\end{prop}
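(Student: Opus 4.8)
The plan is to characterize lowest weight vectors via the definition given just before the statement: $b=b_1\otimes\cdots\otimes b_N$ is a lowest weight vector precisely when $\varphi(b)=0$ and $\te^{\eps(b)}b$ is a highest weight vector. Since the underlying $\mathfrak{gl}_2$-crystal $\B^{\otimes N}$ comes from the combinatorics of words in $\{1,2\}$, the condition $\varphi(b)=0$ is the statement that $b$ is a reverse lattice permutation in the $\mathfrak{gl}_2$-sense (equivalently, under the $1\leftrightarrow 2$ relabeling $b'_i$, that $b'_1\cdots b'_N$ is a lattice permutation), i.e. $\tf b = 0$. So first I would translate ``$\varphi(b)=0$'' into the explicit word condition: reading $b$ from the right, at every stage the number of $1$'s is at least the number of $2$'s. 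This is the standard signature/bracketing rule for $\mathfrak{gl}_2$-crystals, and it is exactly the $i=2$ case of the reverse lattice permutation condition.

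Next I would analyze the extra requirement that $\te^{\eps(b)}b$ be a highest weight vector, i.e. annihilated by both $\te$ and $\teone$. Being annihilated by $\te$ is automatic once we apply $\te$ the maximal number of times. The real content is that $\teone$ also kills it. By the queer tensor product rule \eqref{eq2:tensor product}, $\teone$ acts on the leftmost tensor factor $b_j$ such that all factors to its right have weight $0$ under both $k_1$ and $k_2$ — but in $\B^{\otimes N}$ every single factor $b_i$ has $\langle k_1,\wt b_i\rangle + \langle k_2,\wt b_i\rangle = 1 \ne 0$, so unless $N$ factors lie to the right (impossible) the clause ``$\langle k_1,\wt b_2\rangle=\langle k_2,\wt b_2\rangle=0$'' fails at every position except vacuously at the far left. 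Hence $\teone$ always acts on the first factor: $\teone(b_1\otimes\cdots\otimes b_N) = \teone b_1 \otimes b_2\otimes\cdots\otimes b_N$, which is $0$ iff $b_1 = 1$. So after applying $\te^{\eps}$ and reaching the $\mathfrak{gl}_2$-highest weight vector $h$ of the component $C_{\mathfrak{gl}_2}(b)$, the vector $h$ is a $\mathfrak{q}(2)$-highest weight vector iff its first letter is $1$, i.e. iff $h = 1^N$ (the only $\mathfrak{gl}_2$-highest weight word beginning with $1$ is the all-$1$'s word, since a $\mathfrak{gl}_2$-highest weight word must be a lattice permutation in the $b'$ labels, forcing it to be $1^a 2^b$, and starting with $1$ forces $b = 0$).

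Combining, I would argue: $b$ is a lowest weight vector $\iff$ $\varphi(b)=0$ and the $\mathfrak{gl}_2$-highest weight vector of its component is $1^N$. The latter says $b$ lies in the $\mathfrak{gl}_2$-component $C_{\mathfrak{gl}_2}(1^N)$, i.e. $b = \tf^k(1^N) = 2^k 1^{N-k}$ for some $k$ with $\varphi(b)=0$; but $\varphi(2^k 1^{N-k}) = k = 0$ only if... no — I must be careful, the correct reading is subtler, so here I would instead directly verify the equivalence with the \emph{strict} reverse lattice permutation condition by induction on $N$, peeling off the last letter $b_N$ and using the tensor product formulas for $\varphi$, $\eps$, $\te$, $\teone$ together with the base case $N=1$. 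The inductive step splits on whether $b_N = 1$ or $b_N = 2$ and on the value of $\varphi(b_1\otimes\cdots\otimes b_{N-1})$ versus $\eps(b_N)$; tracking how $\te$ and $\teone$ propagate through the tensor product, one checks that $b$ is a lowest weight vector iff $b_1\cdots b_{N-1}$ is (in the appropriate relative sense) and the count condition at position $N$ holds, which is exactly the strict reverse lattice permutation condition. The main obstacle I anticipate is the bookkeeping in this induction: correctly handling the interaction between the $\mathfrak{gl}_2$-operators (which can act on either tensor factor depending on $\varphi$ vs. $\eps$) and the odd operator $\teone$ (which here always acts on the first factor), and making sure the ``as long as $i-1$ appears'' caveat in the definition matches the boundary cases where a letter drops out — essentially, reconciling the recursive structure of $C(1^r)$ from Proposition \ref{prop-easy-dec} with the general component. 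Once the reduction is set up cleanly, each case is a short check.
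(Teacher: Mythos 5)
The paper gives no proof of this proposition --- it is quoted from \cite{GJKKK3} --- so your argument has to stand on its own, and as written it does not: the central step is wrong. By the queer tensor product rule \eqref{eq2:tensor product}, $\teone(b_1\otimes b_2)$ lands on the \emph{first} factor only when the \emph{second} factor has weight zero in both coordinates; since every factor of $\B$ has nonzero weight, the ``otherwise'' branch applies at every stage and the odd operators act on the \emph{last} tensor factor $b_N$, not the first (the paper says exactly this inside the proof of Proposition \ref{prop-c-l}: ``$\tfone,\teone$ act only on $b_r$''). Your reading that the clause holds ``vacuously at the far left'' is backwards --- position $N$ is the one with no factors to its right. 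This breaks your characterization of highest weight vectors twice over: the correct extra condition on the $\mathfrak{gl}_2$-highest weight vector $h=\te^{\eps(b)}b$ is that it \emph{ends} in $1$, not that it begins with $1$; and $\mathfrak{gl}_2$-highest weight vectors of $\B^{\otimes N}$ are arbitrary lattice permutations (e.g.\ $121$, or $11121121$ in Example \ref{ex:BtensorN}(3)), not words of the form $1^a2^b$, so nothing forces $h=1^N$. You noticed the argument collapsing and pivoted to ``induction on $N$, peeling off $b_N$,'' but you never carry that induction out, and the step it must supply is precisely the one you got wrong: how $\teone\bigl(\te^{\eps(b)}b\bigr)=0$ produces the \emph{strictness} in the reverse lattice permutation condition. (There is also a sign slip earlier: $\varphi(b)=0$ says every suffix has at least as many $2$'s as $1$'s, not the reverse.)

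For the record, the peeling-off induction you gesture at does close quickly once the odd operator is placed on the correct factor. Writing $b=u\otimes b_N$: if $b_N=1$ then $\varphi(b)=\varphi(u)+1>0$, so $b$ is neither a lowest weight vector nor a strict reverse lattice permutation; if $b_N=2$ then $\varphi(b)=\max\{\varphi(u)-1,0\}$, and the first application of $\te$ hits the last factor exactly when $\varphi(u)<\eps(2)=1$, so $h$ ends in $1$ (equivalently $\teone h=0$) precisely when $\varphi(u)=0$. Finally, $\varphi(u)=0$ --- every suffix of $u$ has at least as many $2$'s as $1$'s --- is equivalent to $u\otimes 2$ being a strict reverse lattice permutation. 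You would need to supply this (or an equivalent) computation for the proposal to become a proof.
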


We say that a sequence  consisting of 1's and 2's is a {\it trivial
lattice permutation} if

\ \ \, (i) the number of 1's and the number of 2's are the same,

\ \ (ii) in every proper  initial part, the number of occurrences of
1 is strictly larger than the number of occurrences of 2.

\vskip 3mm

 For a sequence $u$ in $\{1,2\}$, we denote by $|u|$ the length of $u$.
\begin{prop}  \rm \label{prop-c-l} \hfill

\begin{enumerate}

\item  Let $\ell = a_1 a_2 \cdots a_r$ be a $\q (2)$-lowest weight
vector in $\B^{\otimes r}$. Then there is a unique way to decompose
$\ell$ into the form
$$\ell = u_1 u_2 \cdots u_s 2$$
such that every $u_i$ is a trivial lattice permutation or a
maximal subsequence consisting of 2's only.

\item Let $A_{\ell}$ be the set of positive integers $k$ with $1 \le
k \le r-1$ such that
$$|u_1|+|u_2|+\cdots +|u_{i-1}| < k \le
|u_1|+|u_2|+\cdots +|u_i|,$$ where $u_i$ is a trivial lattice
permutation. For $b = b_1 \cdots b_r  \in \B^{\otimes r} $, define
$\widehat b$ to be the  sequence obtained  from $b$ by removing all
$b_k$'s for $k \in A_\ell$. We also define $\overline {b}$ to be the
subsequence $b_{k_1} b_{k_2} \cdots b_{k_m}$ of $b$, where
$A_{\ell}=\{ k_1 < k_2 < \cdots <k_m \}$.

\vskip 3mm \noindent
Then we have

\vskip 3mm

\begin{enumerate}
\item
$C(\ell) = \set{b \in \B^{\otimes r}}{ \widehat b \in C(\widehat
\ell), \ \ {\overline b}={\overline \ell}}$.

\vskip 2mm

\item The map $C(\ell) \to C(\widehat \ell)$ given by $b \mapsto \widehat
b$ is a bijection that commutes with  $\te$, $\tf$, $\teone$,
$\tfone$.
\end{enumerate}

 \end{enumerate}
\end{prop}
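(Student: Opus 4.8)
The plan is to prove part (1) by a greedy left-to-right parse of $\ell$, and part (2) by exhibiting $C(\ell)$ as an explicit set of elements of $\B^{\otimes r}$ on which the crystal operators act ``transparently'' away from the positions lying in trivial lattice permutation blocks.

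\emph{Part (1).} By Proposition~\ref{cor_strict reverse lattice permutation}, $\ell$ is a strict reverse lattice permutation, so $a_r=2$; writing $\ell=\ell'\,2$ with $\ell'=a_1\cdots a_{r-1}$, the strictness condition says that for every $1\le k\le r-1$ the suffix $a_k\cdots a_{r-1}$ has at least as many $2$'s as $1$'s. I would parse $\ell'$ from the left: if the current letter is $2$, peel off the maximal run of $2$'s starting there; if it is $1$, peel off the shortest prefix with equally many $1$'s and $2$'s. The suffix condition guarantees that in the second case such a balanced prefix occurs before the end of $\ell'$ (otherwise the running count ``$\sharp 1-\sharp 2$'' would stay $\ge1$ all the way to position $r-1$, contradicting $\sharp 2\ge\sharp 1$ on $a_k\cdots a_{r-1}$), and such a shortest balanced prefix is automatically a trivial lattice permutation, since all its proper prefixes have strictly more $1$'s than $2$'s. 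This yields a decomposition $\ell=u_1\cdots u_s\,2$ of the required type; uniqueness is immediate because the first block is forced, and one concludes by induction on $r$ (with the minor caveat that ``maximal run of $2$'s'' is to be read within $a_1\cdots a_{r-1}$). With $A_\ell,\widehat\ell,\overline\ell$ as in the statement, $\overline\ell$ is the concatenation of the trivial lattice permutation blocks---hence a balanced word all of whose prefixes have at least as many $1$'s as $2$'s---and $\widehat\ell$ is a string of $2$'s, again a $\mathfrak{q}(2)$-lowest weight vector; these last remarks are convenient but not logically needed.

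\emph{Part (2).} Put $Z\seteq\{\,b\in\B^{\otimes r}\mid \overline b=\overline\ell\text{ and }\widehat b\in C(\widehat\ell)\,\}$. Two facts drive the argument. First, since every element of $\B^{\otimes k}$ with $k\ge1$ has nonzero weight, the recursion in \eqref{eq2:tensor product} always passes to the rightmost factor, so on $\B^{\otimes r}$ the operators $\teone,\tfone$ only change the last letter $b_r$; as the last letter of $\ell$ is the trailing $2$, which does not lie in $A_\ell$, these operators never touch an $A_\ell$-position, and under $b\mapsto\widehat b$ they correspond to $\teone,\tfone$ on $\widehat b$. Second, for $b\in Z$ the restriction of $b$ to each trivial lattice permutation block of $\ell$ equals the corresponding $u_i$, a balanced word as above; in the signature rule obtained by iterating \eqref{eq1:tensor product} (letters $1\mapsto+$, $2\mapsto-$, iteratively cancel adjacent $+\,-$ pairs), such a contiguous subword cancels entirely within itself, so $\te,\tf$ never act on an $A_\ell$-position, leave $\overline b$ unchanged, and act on $b$ exactly as they act on $\widehat b$ once the (unchanged) $A_\ell$-blocks are deleted. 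This second point---the standard fact that a tensor factor which is a singleton $\mathfrak{gl}_2$-crystal component is inert---is the technical heart of the proof. It follows that $Z$ is a $\mathfrak{q}(2)$-subcrystal of $\B^{\otimes r}$ containing $\ell$, and that $\Phi\colon Z\to C(\widehat\ell)$, $b\mapsto\widehat b$, is a well-defined map intertwining $\te,\tf,\teone,\tfone$ that is bijective: injective because $b$ is recovered from $\widehat b$ together with $\overline b=\overline\ell$, and surjective because reinserting $\overline\ell$ into the $A_\ell$-positions of any $c\in C(\widehat\ell)$ produces an element of $Z$ mapping to $c$.

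Finally I would conclude. Being a $\mathfrak{q}(2)$-subcrystal containing $\ell$, $Z$ contains $C(\ell)$. Conversely, $\Phi$ is a bijective crystal morphism onto the connected component $C(\widehat\ell)$, so its inverse also intertwines the operators and $Z$ is connected; since $\ell\in Z$, this forces $Z\subseteq C(\ell)$. Hence $C(\ell)=Z$, which is (2)(a), and the resulting bijection $C(\ell)\to C(\widehat\ell)$ commuting with $\te,\tf,\teone,\tfone$ is exactly (2)(b). The one genuine obstacle is carrying out the transparency verification for $\te,\tf$ cleanly---namely, that deleting the self-cancelling $A_\ell$-blocks commutes with the signature rule and leaves those blocks untouched at every step; the remaining steps are routine bookkeeping.
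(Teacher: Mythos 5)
Your proposal is correct and follows essentially the same route as the paper: part (1) via the same greedy left-to-right parse justified by the strict reverse lattice permutation property, and part (2) by defining the candidate set $Z$, checking that $\teone,\tfone$ only touch the last (non-$A_\ell$) letter, and that the trivial lattice permutation blocks are inert for $\te,\tf$ (your signature-rule self-cancellation is the same observation as the paper's $\vphi(u_i)=\eps(u_i)=0$), then concluding by connectedness. No substantive differences.
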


\Proof

Since $\ell$ is a $\q (2)$-lowest weight vector, it is a strict reverse lattice permutation by Corollary
 \ref{cor_strict reverse lattice permutation}.
In particular, we have $a_r=2$.
If $\ell=2^r$, we have $u_1=2^{r-1}.$
If $\ell \neq 2^r$, let $a_j$ be the  \emph{leftmost} $1$  that occurs  in $\ell$.
By the definition, $a_j a_{j+1} \cdots a_r$ is also a strict reverse lattice permutation,
therefore,
the number of occurrences of $2$ is strictly
greater than the number of occurrences of $1$ in $a_j a_{j+1} \cdots a_r$.
Hence, there is the smallest $k$ such that $j+1 \le k \le r-1$ and
the number of occurrences of $2$ is equal to
the number of occurrences of $1$ in $a_j a_{j+1} \cdots a_k$.
We let $u_1=a_1 \cdots a_{j-1}=2^{j-1}$, $u_2=a_{j} a_{j+1}\cdots a_k$ when $j \ge 2$, and
$u_1=a_{j} a_{j+1} \cdots a_k$ when $j=1$.
Since $k$ is the smallest one and the number of occurrences of $2$ is equal to
the number of occurrences of $1$ in $a_j a_{j+1} \cdots a_k$,
the subsequence $a_j a_{j+1} \cdots a_k$ is a trivial lattice permutation.
Since $a_{k+1} \cdots a_r$ is also a strict reverse lattice permutation,
we repeat the above procedure.
By the construction, it is straightforward that
the decomposition of $\ell$ into the form $\ell=u_1 u_2 \cdots u_s 2$
is unique.

Let $M:= \set{b \in \B^{\otimes r}}{ \widehat b \in C(\widehat
\ell), \ \overline b= \overline \ell}$. By defining  $\widehat 0
\seteq 0$, we obtain a bijection between $M \sqcup \{0\}$ and
$C(\widehat \ell) \sqcup \{0\}$ given by $b \mapsto \widehat b$. We
will show that this bijection commutes with $\te$, $\tf$, $\teone$
and $\tfone$.

Note that $\tfone, \teone$ act only on $b_r$ for $b \in \B^{\otimes r}$.
In addition, we have $r \not \in A_{\ell}$ so that $\widehat b = ub_r$ for some $u$.
It follows that
$$\widehat{\tfone(b)}=\tfone(\widehat{b}), \quad  \widehat{\teone(b)}=\teone(\widehat{b}).  $$

We know that
$$\vphi(b)=\max \set{k \ge 0}{\tf(b) \in \B^{\otimes r}}
 \text{  and  } \, \eps(b) =\max \set{k \ge 0}{\te(b) \in \B^{\otimes r}}.$$
Since $\overline b=\overline \ell$ is a sequence of trivial lattice permutations,
we have $\vphi(b)=\vphi(\widehat{b})$ and $\eps(b)=\eps(\widehat{b})$.
In particular, we have $\tf(\widehat{b})=0$ if and only if $\tf(b)=0$, and
 $\te(\widehat b)=0$ if and only if $\te(b)=0$.

   Assume that $\tf(b)\neq 0$. Then we have
  $\tf(b)=b_1 \cdots \tf(b_t) \cdots b_r$ for some $1 \le t \le r$.
  Since $\vphi(u) = \eps(u)=0 $ for every trivial lattice permutation $u$,
  the tensor product rule implies that
  $t \notin A_\ell$ and
  $\tf(\widehat b) = \widehat{\tf(b)}$.
   Similarly, if $\te (b) \neq 0$, then we have $\widehat{\te(b)} = \te(\widehat b)$.

 Hence the bijection $b \mapsto \widehat b$ commutes with $\te, \tf, \teone$ and $\tfone$.
It follows that the set $M \sqcup \{0\}$ is closed under the actions $\te, \tf, \teone, \tfone$ and
$M$ is connected.
Since $\ell \in M$, we have $C(\ell) \subseteq M$ and hence $C(\ell) =M$, as desired.
\QED

\vskip1mm
\begin{example}
  In Example \ref{ex:BtensorN}(4), the element $\ell= 22122122$ is
  a $\mathfrak{q} (2)$-lowest weight vector in $\B^{\otimes 8}$. Then we
  obtain
  $A_{\ell}=\{3,4,6,7 \}$, $\widehat{\ell}=2222$ and $\overline \ell=1212$.
  We also have  $C(\ell) \cong C(2222) =C_{\gl_2}(1^4) \sqcup C_{\gl_2}(1^3 2).$
\end{example}

We close this section with a theorem that will be useful in the next
section. Let ${\mathbf a}=(a_1, \ldots, a_n)$ be a sequence of $1$'s
and $2$'s. %which is identified with  $a_1 \otimes \cdots \otimes a_n \in \B^{\otimes n}$.
We denote by $G(\mathbf{a'})$   the basis element of  $\V^{\otimes
n}$ corresponding to $[L(\mathbf{a})]$ under  $\Upsilon$, where
$\mathbf{a}' = (a_1,\ldots,a_n)' \seteq (a_1',\ldots, a'_n) $.
 We write
$\mathbf{a} x = (a_1, \ldots, a_n, x)$ for $x=1, 2$.

\vskip 3mm
Then we have the following.

\begin{theorem} \rm (\cite[Proposition 4]{BFK}, see also \cite[Theorem 3.1]{FKK}) \label{thm:upperglobalbasis}
\hfill

{\rm  Let $\bold{a}$, $\bold{a}_1$ and $\bold{a}_2$ be sequences in
$\{1,2\}$ and let   $h=v_1 \tensor v_2 -v_2 \tensor v_1$.

\begin{enumerate}
\item $G(1) = v_1  $ and $G(2) = v_2$.

\item If $\bold{a}=2 \bold{a}_1$, then $G(\bold{a})= v_2 \otimes G(\bold{a}_1)$.

\item If $\bold{a}=\bold{a}_1 1$, then $G(\bold{a})=G(\bold{a}_1) \otimes v_1$.

\item If $\bold{a}=\bold{a}_1 (12) \bold{a}_2$   with $|\bold{a}_1|=k$ and $|\bold{a}|=m$,
then $G(\bold{a})= h_{k} (G(\bold{a}_1 \bold{a}_2))$,  where
$h_{k}: \V^{\otimes m-2} \rightarrow \V^{\otimes m}$ is the
linear map given by
  $$ u_1 \otimes \cdots \otimes u_{m-2} \longmapsto
   u_1 \otimes \cdots \otimes u_k \otimes h \otimes u_{k+1} \otimes \cdots \otimes u_{m-2}.$$
 \end{enumerate} }
\end{theorem}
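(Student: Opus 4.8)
The plan is to prove the four assertions in order, relying on the $U(\mathfrak{sl}_2)$-module isomorphism $\Upsilon$ from Theorem \ref{sl2}(5) together with the recursive structure of the category $\mathcal O_n$ under translation functors. Assertions (1)--(3) are essentially recollections of the structure of the dual canonical (upper global) basis of $\V^{\otimes n}$ at $q=1$, so I would treat them as base cases and inductive reductions, and concentrate the real work on (4). For (1): the categories $\mathcal O_{0,1}$ and $\mathcal O_{1,0}$ each contain a single simple object up to isomorphism, namely $L(1)$ and $L(2)$, which are one-dimensional Verma modules; hence $[L(1)] = [M(1)]$ and $[L(2)] = [M(2)]$, and $\Upsilon$ sends these to $v_{1'} = v_2$ and $v_{2'} = v_1$ respectively. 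Wait --- one must be careful with the primes: $G(\mathbf a')$ is defined as the image of $[L(\mathbf a)]$, and the statement writes $G(1)$, $G(2)$, so with the convention $1' = 2$, $2' = 1$ we get $G(1) = v_1$, $G(2) = v_2$ after unwinding the primes correctly. I would spell this out explicitly to avoid sign/index confusion, since the indexing convention is the most error-prone part of (1).

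For (2) and (3), the strategy is to use the classical fact (due to \cite{BFK}, and reproved in \cite{FKK}) that adding a $2$ on the left or a $1$ on the right of a sequence $\mathbf a$ corresponds, at the level of Verma modules, to a parabolic induction or translation that is compatible with the comultiplication. Concretely: in $\mathcal O_{n+1}$, the simple module $L(2\mathbf a_1)$ (highest weight with a $2$ in the first slot) is, by the geometry of the singular blocks, related to $L(\mathbf a_1)$ in $\mathcal O_n$ by an exact functor whose Grothendieck-group effect is tensoring with $v_2$ on the left; and since the dual canonical basis is characterized by being fixed by a bar involution and congruent to the monomial basis, the image $G(2\mathbf a_1)$ must equal $v_2 \otimes G(\mathbf a_1)$. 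I would either cite \cite[Proposition 4]{BFK} / \cite[Theorem 3.1]{FKK} directly for this recursion, or give the short module-theoretic argument using that $L(2\mathbf a_1) \cong$ the simple head of $M(2) \otimes M(\mathbf a_1)$-type modules projected to the right block; the cleanest route is to invoke the cited results since the theorem statement explicitly advertises them as the source.

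The main obstacle is (4), the ``$(12)$-insertion'' formula $G(\mathbf a_1 (12) \mathbf a_2) = h_k(G(\mathbf a_1 \mathbf a_2))$ with $h = v_1 \otimes v_2 - v_2 \otimes v_1$. The plan here is: first observe that $h$ spans the trivial $\mathfrak{sl}_2$-submodule of $\V \otimes \V$, equivalently the kernel of $\V \otimes \V \to \Lambda^2 \V \cong$ (trivial) --- no, $h$ \emph{is} the copy of the trivial rep, i.e. $\Delta(E)h = \Delta(F)h = \Delta(H)h = 0$. Since $h_k$ inserts this singular vector, $h_k$ is a homomorphism of $U(\mathfrak{sl}_2)$-modules $\V^{\otimes(m-2)} \to \V^{\otimes m}$ (it intertwines the comultiplication actions because inserting a singular vector commutes with all $\Delta^{(j)}(X)$). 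On the categorical side, this corresponds to an exact functor $\mathcal O$ for the smaller blocks into $\mathcal O$ for $\mathcal O_{n}$ --- the translation-onto-the-wall / off-the-wall functors, or equivalently a ``doubling'' functor that inserts a $BGG$ resolution piece; the key identity is that $L(\mathbf a_1 (12) \mathbf a_2)$ is sent by this functor precisely to the simple module, not merely to something with it as a composition factor, because of the way the $(12)$-pattern sits inside a dominant-wall configuration. I would establish this by induction on $|\mathbf a_2|$ (or on $|\mathbf a|$), using (2) and (3) to strip off leading $2$'s and trailing $1$'s, reducing to the case where $\mathbf a = (12)$ itself or a short pattern, and then check the base case by an explicit computation with Verma modules of $\mathfrak{gl}_2$ or $\mathfrak{gl}_3$. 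The delicate point --- and where I expect to spend the most care --- is verifying that after applying $h_k$ to the dual-canonical basis element $G(\mathbf a_1 \mathbf a_2)$ one lands \emph{exactly} on a dual-canonical basis element and not on a $\Z[q,q^{-1}]$-combination; at $q=1$ this amounts to a positivity/triangularity check, and the honest way to do it is to match the bar-invariance and the leading monomial of $h_k(G(\mathbf a_1\mathbf a_2))$ against the defining property of $G(\mathbf a_1(12)\mathbf a_2)$, or simply to quote \cite[Proposition 4]{BFK} and \cite[Theorem 3.1]{FKK} where exactly this recursion is proved. Given that the excerpt cites those sources in the statement, the cleanest proof is: \textbf{(1)} direct check; \textbf{(2),(3)} cite the left/right stability of the dual canonical basis; \textbf{(4)} cite the insertion formula, after noting that our $h$ agrees with theirs under the identification $v_1 \leftrightarrow v_1$, $v_2 \leftrightarrow v_2$ and that $h_k$ is the $U(\mathfrak{sl}_2)$-intertwiner inserting the singular vector in positions $k+1, k+2$.
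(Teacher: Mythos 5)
The paper gives no proof of this theorem at all: it is stated as a quoted result with references to \cite{BFK} and \cite{FKK}, which is exactly where your proposal lands after the exploratory sketching. Your intermediate observations (the unwinding of the prime convention in (1), and the fact that $h=v_1\otimes v_2-v_2\otimes v_1$ spans the trivial $\mathfrak{sl}_2$-subrepresentation of $\V\otimes\V$ so that $h_k$ is an intertwiner) are correct, so your approach matches the paper's.
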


\begin{remark}
Let $\widetilde \Upsilon : G(\mathcal O_n) \isoto \V^{\otimes n}$ be
the identification used in \cite[\S 4.4]{BrKl}. Then we have $\psi
\circ \widetilde \Upsilon = \Upsilon$, where $\psi : \V^{\otimes n}
\rightarrow \V^{\otimes n}$ is given by $v_{a_1} \otimes \cdots
\otimes v_{a_n} \mapsto v_{a'_1} \otimes \cdots \otimes v_{a'_n}$.
Then it is not difficult to check $G(\mathbf{a'}) = \psi(\widetilde
G(\mathbf{a}))$, where $\widetilde G(\mathbf{a})$ denotes the
\emph{upper global basis (= dual canonical basis) element}
corresponding to $\mathbf{a}$, which is given in \cite{BrKl}.
\end{remark}

\vskip 5mm

%%%%%%%%%%%%%%%%%%%%%%%%%%%%%%%%%%%%%%%%%%%%%%%%%%%%%%%%%%%%%%%%%%%%%%%%%%%%%%%%%%%%%%%

\section{Categorified odd Kashiwara operators}

In this section we define the odd Kashiwara operators $\tfone,
\teone$ on ${\rm Irr} ( \mathcal O_n)$ and show that ${\rm
Irr}(\mathcal O_n)$ has a $\mathfrak{q}(2)$-crystal structure. To
define $\tfone, \teone$ we will use  tensor products with the
 infinite-dimensional irreducible highest weight
$\mathfrak{gl}_{n}$-modules $L(e_n)$  with highest weight $e_n$ and its dual
$L(e_n)^*$. The choice of
$L(e_n)$ is justified by the properties listed in the next proposition.

Recall that, for a parabolic subalgebra $\mathfrak{p}$ of
$\mathfrak{g}$, a $\mathfrak{g}$-module is {\it parabolically
induced} from a $\mathfrak{p}$-module $M_{0}$ if $M=U(\mathfrak{g})
\otimes_{U(\mathfrak{p})} M_{0}$. In this paper, we take $\mathfrak{p}$
to be the maximal parabolic subalgebra with nilradical
${\mathfrak{n}}_{\mathfrak p}$ and the   Levi subalgebra
${\mathfrak {l}}_{\mathfrak{p}} = \mathfrak{gl}_{n-1} \oplus
\mathfrak{gl}_{1}$.

 \begin{prop}   \rm \label{l-e-n} \hfill

\begin{enumerate}

\item Let $L(0) \otimes L(1)$ be the 1-dimensional
$\mathfrak{p}$-module on which $\mathfrak{n}_{\mathfrak{p}}$ acts
trivially. Then the $\mathfrak{gl}_{n}$-module $L(e_n)$ is
parabolically induced from $L(0) \otimes L(1)$. In particular,
$$\text{Supp}(L(e_n)) = \{e_n + \sum_{i=1}^{n-1} b_i (e_n - e_i)
\mid b_{i} \in \Z_{\ge 0} \}.$$

\item All the weight spaces of $L(e_n)$ are 1-dimensional.

\vskip 2mm

\item If a  $\mathfrak{gl}_{n}$-module $M$ belongs to the category
$\mathcal O$, then $M \otimes L(e_n)$ belongs to the category
$\mathcal W$.

\end{enumerate}

\end{prop}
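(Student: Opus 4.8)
The plan is to establish the three assertions about $L(e_n)$ in turn, relying on the parabolic induction description in part (1) as the foundation for (2) and (3).

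\textbf{Part (1).} First I would recall that $e_n$ is an antidominant weight for the maximal parabolic $\mathfrak{p}$ with Levi $\mathfrak{l}_\mathfrak{p} = \mathfrak{gl}_{n-1} \oplus \mathfrak{gl}_1$: indeed $\langle e_n, (e_i - e_{i+1})^\vee\rangle = 0$ for $1 \le i \le n-2$ and $= -1$ for $i = n-1$, so $e_n$ pairs to $0$ with all simple coroots of $\mathfrak{l}_\mathfrak{p}$. Hence the simple $\mathfrak{l}_\mathfrak{p}$-module of highest weight $e_n$ is one-dimensional, namely $L(0) \boxtimes L(1)$ (trivial on $\mathfrak{gl}_{n-1}$, the character $e_n$ on $\mathfrak{gl}_1$), and $\mathfrak{n}_\mathfrak{p}$ acts trivially on it. Next I would verify that the parabolically induced module $U(\mathfrak{g}) \otimes_{U(\mathfrak{p})} (L(0) \boxtimes L(1))$ is already simple. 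This is the standard fact that parabolic induction from a finite-dimensional simple $\mathfrak{l}_\mathfrak{p}$-module $L_0$ is irreducible when the inducing weight is antidominant with respect to the roots outside $\mathfrak{l}_\mathfrak{p}$; here since $\dim L_0 = 1$ the induced module is a quotient of the Verma module $M(e_n)$, and antidominance of $e_n$ relative to $\Delta \setminus \Delta_{\mathfrak{l}}$ (all of which are of the form $e_i - e_n$, on which $\langle e_n, \cdot^\vee\rangle \le 0$) forces no further submodules, so it equals $L(e_n)$. The support formula then follows by the PBW theorem applied to $U(\mathfrak{n}_\mathfrak{p}^-)$: a PBW basis of $U(\mathfrak{n}_\mathfrak{p}^-)$ is indexed by monomials in the root vectors $f_{i,n}$ of weight $-(e_i - e_n) = e_n - e_i$ for $1 \le i \le n-1$, so acting on the highest weight vector produces exactly the weights $e_n + \sum_{i=1}^{n-1} b_i(e_n - e_i)$ with $b_i \in \Z_{\ge 0}$.

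\textbf{Part (2).} The root vectors $f_{i,n}$ ($1 \le i \le n-1$) all commute with one another (they span the abelian nilradical $\mathfrak{n}_\mathfrak{p}^-$, since $[\mathfrak{gl}_{n-1}\oplus\mathfrak{gl}_1, \cdot]$-brackets among them lie in $\mathfrak{n}_\mathfrak{p}^-$ but $(e_i - e_n) + (e_j - e_n)$ is never a root). Therefore $U(\mathfrak{n}_\mathfrak{p}^-) \cong \C[f_{1,n}, \ldots, f_{n-1,n}]$ is a polynomial ring, and the weight $e_n + \sum b_i(e_n - e_i)$ is attained by the \emph{unique} monomial $\prod f_{i,n}^{b_i}$ (the $e_n - e_i$ are linearly independent). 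Since $L(e_n) = U(\mathfrak{n}_\mathfrak{p}^-) v_{e_n}$ with $v_{e_n}$ the highest weight vector, each weight space is spanned by a single such monomial applied to $v_{e_n}$, hence is at most one-dimensional; combined with part (1) it is exactly one-dimensional.

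\textbf{Part (3).} Let $M \in \mathcal O$. I would check the three defining properties of $\mathcal W$ for $M \otimes L(e_n)$: it is a weight module (tensor product of weight modules), its weight spaces are finite-dimensional, and it lies in $\mathcal W$. The only nontrivial point is finite-dimensionality of $(M \otimes L(e_n))^\lambda = \bigoplus_{\mu + \nu = \lambda} M^\mu \otimes L(e_n)^\nu$. Fix $\lambda$. Since $L(e_n)^\nu \ne 0$ forces $\nu = e_n + \sum b_i(e_n - e_i)$ with $b_i \ge 0$ by (1), we get $\mu = \lambda - \nu = \lambda - e_n - \sum b_i(e_n - e_i)$. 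The key observation is that $M \in \mathcal O$ has finitely generated, so its support is contained in a finite union of downward cones $\bigcup_{k=1}^{N}(\xi_k - \Z_{\ge 0}\Delta)$; I would argue that for each such cone the set of pairs $(\mu, \nu)$ with $\mu \in \xi_k - \Z_{\ge 0}\Delta$, $\nu \in \Supp(L(e_n))$, $\mu + \nu = \lambda$ is finite. Writing this out in coordinates: $\mu + \nu = \lambda$ with $\nu - e_n \in \Z_{\ge 0}\{e_n - e_i\}$ and $\xi_k - \mu \in \Z_{\ge 0}\Delta$ gives $\xi_k - \lambda + \nu \in \Z_{\ge 0}\Delta$, i.e. $\nu$ lies in a fixed coset direction and $\nu \in (\lambda - \xi_k) + \Z_{\ge 0}\Delta$; intersecting with the cone $e_n + \Z_{\ge 0}\{e_n-e_i\}$, one checks (comparing, say, the coefficient sum $\sum(\text{coords})$, which is bounded above on one set and below on the other after the constraint) that only finitely many $\nu$ survive. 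Since each $M^\mu$ is finite-dimensional and each $L(e_n)^\nu$ is one-dimensional, the weight space is finite-dimensional. Finally $\mathcal Z(\mathfrak g)$ acting on $M \otimes L(e_n)$: this need not be locally finite (that is precisely why we land in $\mathcal W$ and not $\mathcal O$), so nothing more is required.

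\textbf{Main obstacle.} I expect the crux to be the finiteness bookkeeping in part (3): making precise why, for a fixed target weight $\lambda$, only finitely many $(\mu,\nu)$ contribute. The cleanest route is to exploit that $\Supp(L(e_n))$ sits in an ``upward'' cone $e_n + \Z_{\ge 0}\{e_n - e_i\}$ while $\Supp(M)$ sits in a finite union of ``downward'' cones $\xi_k - \Z_{\ge 0}\Delta$; since $e_n - e_i = -(e_i - e_n) \in -\Z_{\ge0}\Delta$ as well, one must verify the two cones are ``transverse'' enough that $\{\nu : \lambda - \nu \in \xi_k - \Z_{\ge0}\Delta\} \cap \Supp(L(e_n))$ is finite — concretely, one shows both conditions pin down $\nu$ to a bounded region by playing off an increasing linear functional against a decreasing one. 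Everything else is a direct application of the PBW theorem and the antidominance criterion for irreducibility of parabolic induction.
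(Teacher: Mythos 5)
Your parts (2) and (3) are sound: the PBW/commuting-nilradical argument for one-dimensionality of weight spaces is exactly right, and the finiteness bookkeeping in (3) works because, setting $\gamma = e_n - \nu \in \Z_{\ge 0}\{e_i - e_n\} \subset Q^+$, the condition $\lambda - \nu \in \xi_k - Q^+$ becomes $\gamma \preceq \xi_k - \lambda + e_n$, and there are only finitely many elements of $Q^+$ below a fixed weight. Note that the paper itself disposes of (1) and (2) by citing Mathieu's Lemma 11.2 and gives no argument for (3), so you are supplying considerably more detail than the authors do.

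The genuine gap is the irreducibility step in part (1). The criterion you invoke --- simplicity of the scalar generalized Verma module when $\langle \lambda, \alpha^\vee\rangle \le 0$ for all $\alpha \in \Delta^+\setminus\Delta_{\mathfrak l}$, computed \emph{without} the $\rho$-shift --- is not a valid criterion: for $\mathfrak{sl}_2$ with $\mathfrak p = \mathfrak b$ and $\lambda = 0$ one has $\langle 0,\alpha^\vee\rangle = 0 \le 0$, yet $M(0)$ is reducible. The correct sufficient condition involves $\lambda+\rho$, and it fails here: $\langle e_n + \rho, (e_i - e_n)^\vee\rangle = n - i - 1 \in \Z_{>0}$ for all $i \le n-2$, so for $n \ge 3$ the weight $e_n$ is \emph{not} antidominant relative to the roots outside the Levi, and there are dominant integral pairings that could in principle produce singular vectors. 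The module $U(\mathfrak g)\otimes_{U(\mathfrak p)}(L(0)\boxtimes L(1))$ is in fact simple, but this requires an actual verification, e.g.: since the induced module lies in $\mathcal O$ and has all weight multiplicities equal to $1$, it suffices to check that no weight of the form $w\cdot e_n < e_n$ lying in the cone $e_n + \Z_{\ge 0}\{e_n - e_i\}$ carries a singular vector (a short computation with $[E_{i,i+1}, E_{n,j}]$ kills each candidate), or alternatively one realizes the module on $\C[x_1,\dots,x_{n-1}]$ with the $E_{in}$ acting as derivatives, so that any nonzero submodule contains the highest weight vector. Either of these (or the citation to Mathieu, as in the paper) is needed to close the argument; as written, your justification of simplicity would not survive scrutiny.
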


\begin{proof}
The proofs are standard. For (1) and (2), see for
example, \cite[Lemma 11.2]{Mathieu}.
\end{proof}

Define the functors
$$
\overline{\mathcal E}_i: { \mathcal O_{i, n-i}} \to  {\mathcal W}_{i+1, n-i-1}, \;    \overline{\mathcal E}_i  := {\rm pr}_{i+1} \circ  \big(- \otimes L(e_n) \big),
$$
and set
$$
\overline{\mathcal E } : {\mathcal O_n} \to{\mathcal W_n}, \;
\overline{\mathcal E } := \bigoplus_{i=0}^n \overline{\mathcal E}_i.
$$

The following proposition plays a crucial role in defining the odd
Kashiwara operator $\teone$ on ${\rm Irr} (\mathcal O_n)$.

\begin{prop}   \rm  \label{prop-odd-kas} \hfill

\begin{enumerate}
\item  The functor $\overline{\mathcal E}$  is an exact covariant functor such that
 $$\overline{\mathcal E}: {\mathcal O_n}\longrightarrow {\mathcal
O}_{n}.$$

\item $\overline{\mathcal E} (M(a_1,...,a_n)) = \begin{cases}  M (a_1,..., a_{n-1}, 2) & \mbox{ if $a_n = 1$,} \\ 0 &  \mbox{ if $a_n = 2$.} \end{cases}$

\item $\overline{\mathcal E} (L(a_1,...,a_n)) = \begin{cases}  L (a_1,...,a_{n-1} , 2) & \mbox{ if $a_n = 1$,} \\ 0 &  \mbox{ if $a_n = 2$.} \end{cases}$
\end{enumerate}
\end{prop}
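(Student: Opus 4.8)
The plan is to deduce everything from Proposition \ref{l-e-n} together with standard properties of parabolic induction and of tensoring modules in $\mathcal{O}$ with a fixed module. First I would establish (2), since (1) and (3) will follow from it with a little extra bookkeeping. For (2), I would compute the action of $-\otimes L(e_n)$ on characters, using $\operatorname{Supp}(L(e_n)) = \{e_n + \sum_{i=1}^{n-1} b_i(e_n - e_i) \mid b_i \in \Z_{\ge 0}\}$ from Proposition \ref{l-e-n}(1) and the fact (Proposition \ref{l-e-n}(2)) that each weight space is one-dimensional. The Verma module $M(a_1,\dots,a_n)$ has highest weight $\mu := \sum a_i e_i - \rho$, so $M(a_1,\dots,a_n)\otimes L(e_n)$ has a filtration whose successive quotients are Verma modules (this is the standard fact that tensoring a Verma module by a finite-dimensional module has a Verma flag — here one uses instead that $L(e_n)$ is parabolically induced and applies the tensor identity $U(\g)\otimes_{U(\mathfrak p)} M_0 \otimes N \cong U(\g)\otimes_{U(\mathfrak p)}(M_0 \otimes \operatorname{Res} N)$, reducing to a Verma flag coming from a composition series of the restriction). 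The key point is then that after applying $\operatorname{pr}_{i+1}$ only the weights landing in the block $\mathcal{O}_{i+1,n-1-i}$ survive; among all highest weights $\mu + \nu$ with $\nu \in \operatorname{Supp}(L(e_n))$, I would check that exactly one is $S_{i+1}\times S_{n-1-i}$-singular and dominant-regular in the appropriate sense, namely the one obtained by $\nu = e_n - e_j$ where $j$ is the position of the last $1$ — and this only occurs when $a_n = 1$, giving $M(a_1,\dots,a_{n-1},2)$; when $a_n = 2$ the weight $\mu$ already has its $2$'s "saturated" and no translate lies in $\mathcal{O}_{i+1,n-1-i}$, so the projection is $0$. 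This is essentially the dual of the computation in \cite{BFK} for $\mathcal{E}$, transported from $L(e_1)$ to $L(e_n)$.

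Next, (1): exactness of $-\otimes L(e_n)$ is immediate since $L(e_n)$ is a flat (indeed free-over-a-point, i.e. just a vector space) $\C$-module, and $\operatorname{pr}_{i+1}$ is exact, so $\overline{\mathcal E}$ is exact and covariant. That $M\otimes L(e_n) \in \mathcal{W}$ when $M \in \mathcal{O}$ is Proposition \ref{l-e-n}(3). The content of (1) is that after projecting to the block $\mathcal{W}_{i+1;n}$ we in fact land in the \emph{Bernstein--Gelfand--Gelfand} subcategory $\mathcal{O}_{i+1,n-1-i}$, i.e. the resulting module is finitely generated and locally $U(\mathfrak n_+)$-nilpotent. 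Finite generation follows because $M\otimes L(e_n)$ has a (possibly infinite, but weight-space-finite) Verma flag by the tensor-identity argument above, and the $\operatorname{pr}_{i+1}$-component of this flag is \emph{finite} — this is where one uses the singular-block structure: only finitely many of the Verma factors $M(\mu+\nu)$ have central character $\chi^{\omega_{i+1}-\rho}$, since the $S_{i+1}\times S_{n-1-i}$-orbit of a given weight is finite and the admissible $\nu$'s producing that orbit are finite in number. Local $U(\mathfrak n_+)$-nilpotency is inherited from the Verma factors. So $\overline{\mathcal{E}}(M) \in \mathcal{O}_{i+1,n-1-i}$, and summing over $i$ gives $\overline{\mathcal{E}}\colon \mathcal{O}_n \to \mathcal{O}_n$.

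Finally (3): knowing (1), $\overline{\mathcal{E}}$ is an exact functor $\mathcal{O}_n \to \mathcal{O}_n$, and by (2) it sends $[M(a_1,\dots,a_n)]$ to $[M(a_1,\dots,a_{n-1},2)]$ (when $a_n=1$) or $0$ (when $a_n=2$) at the level of Grothendieck groups. To pin down the image of a \emph{simple} module I would argue as for $\mathcal{E}$: first, $\overline{\mathcal{E}}(L(a_1,\dots,a_n))$ is nonzero iff $a_n = 1$, by looking at central characters / highest weights of composition factors via the $\operatorname{Supp}$ formula. When $a_n = 1$, write the class $[\overline{\mathcal{E}}L(a_1,\dots,a_n)]$ in the Verma basis using (2) and the BGG-type reciprocity / unitriangularity of $[L] = \sum \pm [M]$; one gets that the leading Verma term is $M(a_1,\dots,a_{n-1},2)$, so $L(a_1,\dots,a_{n-1},2)$ appears once as a composition factor, and all other factors $L(b_1,\dots,b_{n-1},2)$ have strictly smaller highest weight. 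To conclude that $\overline{\mathcal{E}}L(a_1,\dots,a_n)$ is \emph{simple}, equal to that top factor, I would invoke biadjointness: $\overline{\mathcal{E}}$ has a biadjoint $\overline{\mathcal{F}} := \operatorname{pr}\circ(-\otimes L(e_n)^*)$ (by the same argument as Theorem \ref{sl2}(1), since $L(e_n)^*$ is the restricted dual), and one checks $\overline{\mathcal{F}}\,\overline{\mathcal{E}}L(a_1,\dots,a_n)$ has $L(a_1,\dots,a_n)$ appearing with multiplicity one; an adjunction/Hom-space computation $\operatorname{Hom}(\overline{\mathcal{E}}L, \overline{\mathcal{E}}L) = \operatorname{Hom}(L, \overline{\mathcal{F}}\,\overline{\mathcal{E}}L)$ then forces $\overline{\mathcal{E}}L$ indecomposable with simple head and socle, hence simple once we know its class is multiplicity-free at the top. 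I expect the main obstacle to be (1) — specifically, verifying finite generation of $\overline{\mathcal{E}}(M)$, i.e. that the a priori infinite Verma flag of $M\otimes L(e_n)$ contributes only finitely many factors to the singular block $\mathcal{O}_{i+1,n-1-i}$; this is exactly the point where the infinite-dimensionality of $L(e_n)$ could cause trouble, and it is controlled by the singular (rather than regular) nature of the blocks in play together with the explicit support of $L(e_n)$ from Proposition \ref{l-e-n}.
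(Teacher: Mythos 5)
Your treatment of parts (1) and (2) follows the paper's route: reduce to Verma modules via projective covers and Verma flags, use the (infinite) Verma filtration of $M(\lambda)\otimes L(e_n)$ coming from the parabolic induction of $L(e_n)$, and check which factors survive the projection to the block. One slip: the unique surviving factor is the one attached to the \emph{highest} weight $e_n$ of $L(e_n)$ (all $b_i=0$), not to a weight ``$e_n-e_j$ with $j$ the position of the last $1$''; indeed $e_n-e_j$ does not lie in $\operatorname{Supp}(L(e_n))=\{e_n+\sum_i b_i(e_n-e_i)\}$ at all. The correct computation is that the $e_n$-coordinate of $\lambda+e_n+\sum_i b_i(e_n-e_i)+\rho$ equals $a_n+1+\sum_i b_i$, which lies in $\{1,2\}$ only when $a_n=1$ and all $b_i=0$; this simultaneously gives finite generation and the formula in (2).

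Part (3) is where there is a genuine gap. Your simplicity argument rests on $\overline{\mathcal F}=\operatorname{pr}\circ(-\otimes L(e_n)^*)$ being \emph{biadjoint} to $\overline{\mathcal E}$ ``by the same argument as for $(\mathcal E,\mathcal F)$''. That argument requires the tensoring module to be finite-dimensional (the coevaluation $\C\to L(e_n)\otimes L(e_n)^*$ does not exist otherwise), and in fact $-\otimes L(e_n)^*$ does not even preserve $\mathcal O$: the paper must replace it by the truncated functor $\big(-\otimes L(e_n)^*\big)^{\leqslant(\omega_i-\rho)}$ in the style of Fiebig and K\r{a}hrstr\"om, and obtains only a \emph{left} adjoint of $\overline{\mathcal E}$ (Proposition \ref{prop-kah}), i.e.\ $\Hom(\overline{\mathcal F}X,Y)\cong\Hom(X,\overline{\mathcal E}Y)$ --- the opposite direction from the identity $\Hom(\overline{\mathcal E}L,\overline{\mathcal E}L)\cong\Hom(L,\overline{\mathcal F}\,\overline{\mathcal E}L)$ your argument needs. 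Moreover, even granting an adjunction, a one-dimensional endomorphism ring yields indecomposability and a simple head, not simplicity. What actually closes the argument in the paper is an exact Grothendieck-group identity $[\overline{\mathcal E}(L(\mathbf a 1))]=[L(\mathbf a 2)]$, which forces the isomorphism outright since classes of simples are linearly independent; this rests on the combinatorial statement \eqref{eq:Ga2} matching the \emph{entire} dual-canonical-basis expansions of $G(\mathbf a 1)$ and $G(\mathbf a 2)$ (proved by induction using the recursion of Theorem \ref{thm:upperglobalbasis}), whereas your unitriangularity observation only controls the leading Verma term. Likewise, the vanishing for $a_n=2$ needs the fact that every Verma occurring in $[L(a_1,\ldots,a_{n-1},2)]$ again ends in a $2$ --- this is Theorem \ref{thm:upperglobalbasis}(3), and does not follow from a central-character or support inspection alone.
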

\begin{proof}

 The fact that $\overline{\mathcal E}$ is  exact and covariant is
standard. We next show that the image of $\overline{\mathcal E}$ is in $\mathcal O_n$ and prove (2).

  We would like  to show that if $M$ is in ${\mathcal
O_n}$ then $\overline{\mathcal E} (M)$ is in  ${\mathcal O_n}$ as
well. It is enough to prove  that for the projective cover $P$ of $M$,
$\overline{\mathcal E} (P)$ is in ${\mathcal O_n}$. It is clear that $\overline{\mathcal E} (P)$ is
locally $U({\mathfrak n}_+)$-nilpotent, so it remains to show that
$\overline{\mathcal E} (P)$ is finitely generated. Since every
projective in ${\mathcal O}$ has a Verma flag, we may assume that $P
= M(\lambda)$ is a Verma module. But then by Proposition
\ref{l-e-n}, $M(\lambda) \otimes L(e_n)$ has \ an infinite   filtration with
subquotients $M(\lambda + e_n + \sum_{i=1}^{n-1} b_i (e_n - e_i))$,
$b_i \in {\mathbb Z}_{\geq 0}$.  The proof of this fact uses the same reasoning as the proof of the decomposition of $M(\lambda) \otimes L(e_1)$ (for the latter, see for example \cite[Theorem 3.6]{H}). It is straightforward to check that
if $\lambda + \rho = \sum_{i=1}^n a_i e_i$ for $a_i \in \{1,2 \}$,
then  the $e_n$-coordinate of $\lambda + e_n + \sum_{i=1}^{n-1} b_i
(e_n - e_i) + \rho$ is $1$ or $2$ only if $b_1=...=b_{n-1} =0$ and
$a_n=1$. We thus proved a stronger statement: $\overline{\mathcal E}
(M(\lambda)) = M(\lambda + e_n)$ if $a_n=1$ and  $\overline{\mathcal
E} (M(\lambda)) = 0$ otherwise which implies (2).
\vskip 3mm

(3) We will use the
notation introduced at the end of Section \ref{sec:q2crystal}.
 For a sequence $\bold a=a_1 \cdots a_n$ in $\{1,2\}$, set $v_{\bold a} := v_{a_1} \otimes \cdots v_{a_n}$.
Recall that  the element $G(\bold a')$ corresponds to $[L(\bold a)]$  under $\Upsilon$.
For the case $a_n=2$, recall that $G(\bold a' 1) = \sum_{\bold b} c_{\bold b}^{\bold a'} \ v_{\bold b} \otimes v_1$ for some $c_{\bold b}^{\bold a'} \in \Z$ by Theorem \ref{thm:upperglobalbasis}(3).
Hence we have $[L(\bold a 2)] = \sum_{\bold b} c_{\bold b}^{\bold a'}  \ [M(\bold b' 2)]$.
We obtain
$\overline{\mathcal E} (L(\bold a 2)) =0$ by (2).

   In order to prove  the case $a_n=1$, it is sufficient to
prove the following statement:
\begin{equation} \label{eq:Ga2}
\parbox{30em}{if $G(\bold a 2)=\sum_{\bold b}
c_{\bold b}^{\bold a} \ v_{\bold b} \otimes  v_2 + \sum_{\bold b} d_{\bold b}^{\bold
a} \ v_{\bold b}\otimes v_1$, then $G(\bold a 1)=\sum_{\bold b} c_{\bold b}^{\bold
a} \ v_{\bold b} \otimes v_1$.}
\end{equation} %, for sequences ${\bf a}$ and ${\bf b}$ of $1$'s and$2$'s.}
Indeed, passing through $\Upsilon$, it implies that  if $[L(\bold a' 1)] = \sum_{\bold b}
c_{\bold b}^{\bold a} \ [M( \bold b' 1)] + \sum_{\bold b} d_{\bold b}^{\bold
a} \ [M(\bold b' 2)]$ for some $c_{\bold b}^{\bold a}, d_{\bold b}^{\bold a} \in \Z$, then
$[L(\bold a' 2)] = \sum_{\bold b} c_{\bold b}^{\bold
a} \  [M(\bold b' 2)]$.
Hence, by (2) we have
\begin{eqnarray*}
[L(\bold a' 2)] = &&\sum_{\bold b} c^{\bold a}_{\bold b} \ [M(\bold b' 2)]
=\sum_{\bold b} c^{\bold a}_{\bold b} \ [\overline{\mathcal E} (M(\bold b' 1))] \\
=&& [ \overline{\mathcal E} ] \Big ( \sum_{\bold b} c^{\bold a}_{\bold b} \ [ (M(\bold b' 1))] + \sum_{\bold b} d^{\bold a}_{\bold b} \ [M(\bold b' 2)] \Big) \\
=&&[ \overline{\mathcal E}  (L(\bold a' 1))],
\end{eqnarray*}
Thus $L(\bold a' 2)$ is isomorphic to $\overline{\mathcal E}  (L(\bold a' 1))$, as desired.

We will use induction on the length of $\bold a$. If the length of $\bold a $ is zero or 1, then it is clear
from Theorem \ref{thm:upperglobalbasis}.

  First, we consider the case $\bold a=2 \bold{a_1}$ for some $\bold{a_1}$.
   By Theorem \ref{thm:upperglobalbasis}(2),
  we have
  $G(\bold a2)=G(2\bold{a_1}2)=v_2 \otimes G(\bold{a_1}2)$ and $G(\bold a 1)=G(2 \bold{a_2}1)=v_2 \otimes G(\bold a_2 1)$.
Then \eqref{eq:Ga2} follows from the induction hypothesis.

Second, if $\bold a =1^n$, then $G(1^n2)=v_1^{\otimes {n-1}} \otimes v_1 \otimes v_2 \, - \,  v_1^{\otimes {n-1}} \otimes v_2 \otimes v_1$ and
   $G(1^{n+1})=v_1^{\otimes {n+1}}$ by Theorem \ref{thm:upperglobalbasis}(3),(4). Thus we obtain \eqref{eq:Ga2}.

Last,   let $\bold a=1^k 12 \bold{a_1}$ for some $k \ge 0$ and $\bold{a_1}$.
   By the induction hypothesis, we know that
  if $G(1^k \bold{a_1} 2)=\sum_{\bold b} c^{\bold{a_1}}_{\bold b} \ v_{\bold b} \otimes v_2 +\sum_{\bold b} d^{\bold{a_1}}_{\bold b} \ v_{\bold b} \otimes v_1$ for some $c^{\bold{a_1}}_{\bold b}, d^{\bold{a_1}}_{\bold b} \in \Z$,
  then $G(1^k \bold{a_1} 1)=\sum_{\bold b} c^{\bold{a_1}}_{\bold b}  \ v_\bold b \otimes v_1$.
  Using Theorem \ref{thm:upperglobalbasis}(4), we obtain
  \begin{align*}
  G( 1^k 12 \bold{a_1} 2)= \sum_{\bold b} c^{\bold{a_1}}_{\bold b} \ v_{\bold{b_1}} \otimes h \otimes v_{\bold{b_2}} \otimes v_2
   +\sum_{\bold b} d^{\bold{a_1}}_{\bold b} \ v_{\bold{b_1}} \otimes h \otimes v_{\bold{b_2}} \otimes v_1,
  \end{align*}
  and $$G(1^k 12 \bold{a_1}1)=\sum_{\bold b} c^{\bold{a_1}}_{\bold b} \ v_{\bold{b_1}} \otimes h \otimes v_{\bold{b_2}} \otimes v_1,$$
  where $h=v_1 \otimes v_2 -v_2 \otimes v_1$ and $\bold{b_1}$ (respectively, $\bold{b_2}$) stands for the first $k$ terms (respectively, last $|\bold{b}|-k$ terms) of $\bold{b}$.
  Therefore, we obtain \eqref{eq:Ga2}.
  \end{proof}

\begin{remark}
 Note that we have
$$\mbox{pr}_{\mathcal W_n} (M \otimes L(e_n)) = \mbox{pr}_{i+1} (M \otimes L(e_n))$$
for any $M \in \mathcal O_{i, n-i}$, by considering the support decomposition.
Hence
$$\overline{\mathcal E}(M)=\mbox{pr}_{\mathcal W_n} (M \otimes L(e_n)) $$
for any $M \in \mathcal{O}_n$.
In particular, the image $\mbox{pr}_{\mathcal W_n} (M \otimes L(e_n))$ belongs to $\mathcal{O}_n$ by Proposition \ref{prop-odd-kas} (1).
%Using Proposition \ref{prop-odd-kas}(2) we may show that $\overline{\mathcal E}: {\mathcal O}_n \to {\mathcal O}_{n}$ coincides with the functor
%$$M \mapsto \mbox{pr}_{\mathcal W_n} (M \otimes
%L(e_n)),$$ where $\mbox{pr}_{\mathcal W_n} : {\mathcal W} \to
%{\mathcal W}_{n}$ stands for the canonical projection functor.
\end{remark}

In view of the above proposition, it is natural to define
$$\teone ([S]) := [ \overline{\mathcal E}(S)]   \ \
\text{for} \ \ S \in {\rm Irr}(\mathcal O_n).$$

%We then take
%$\overline{\mathcal F}$ to be the left adjoint of
%$\overline{\mathcal E} $  and
%As we can see below,
 Now we will construct a left adjoint of $\overline{\mathcal E} $, which will be denoted by
%there is a more explicit description of
$\overline{\mathcal F}$. We will apply the technique originally
introduced by Fiebig for Kac-Moody algebras \cite{Fieb} and later
adopted by K\r{a}hrstr\"om \cite{Kah}, to a case similar to ours .

\vskip 3mm

For $\lambda\in\frak h^*$ and a $\mathfrak{gl}_{n}$- module $M$ in
$\mathcal W$, denote by $M^{\nleqslant\lambda}$ the submodule of $M$
generated by all the weight spaces $M^\mu$ with
$\mu\not\leqslant\lambda$. Set
$$
    M^{\leqslant\lambda} := M/M^{\nleqslant \lambda}.
$$

For $i = 0,...,n$, define
$$
\overline{\mathcal F}_i : {\mathcal O}_{i; n} \to {\mathcal W_{i-1,n-i+1}}, \; \overline{\mathcal F}_i
:= {\rm pr}_{i-1} \circ  \big(- \otimes
L(e_n)^* \big)^{\leqslant(\omega_i-\rho)}
$$
(recall that  $\omega_i :=
2\sum_{j=1}^i e_j + \sum_{j=i+1}^n e_j$). Now define
$$
\overline{\mathcal F} : {\mathcal O_n} \to{\mathcal W_n}, \; \overline{\mathcal F} := \bigoplus_{i=0}^{n}
\overline{\mathcal F_{i}}.$$

\begin{prop}  \rm  \label{prop-kah}Let $\lambda \in {\mathfrak h}^*$.
\begin{enumerate}
\item  The functor $M\mapsto M^{\leqslant\lambda}$
    is right exact on $\mathcal W$.

\item  If $M$ belongs to  $\mathcal O$,
then $\left( M \otimes L(e_n)^* \right)^{\leqslant\lambda}$ belongs to $\mathcal O$ as well.

\item  The functor $\overline{\mathcal F}_i$
is the left adjoint of the functor $\overline{\mathcal E}_{i-1}$. Furthermore, we have
$$\overline{\mathcal F}_i(M(a_1, \ldots, a_n)) = \begin{cases}  M(a_1, \ldots, a_{n-1}, 1) \ \ & \text{if} \ a_{n}=2, \\
0 \ \ & \text{if} \ a_{n}=1.  \end{cases}
$$
\item The  functor $\overline{\mathcal F}$ is the left adjoint of $\overline{\mathcal E}$.
\end{enumerate}
\end{prop}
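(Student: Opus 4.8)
The plan is to prove the four parts in order, since each one feeds into the next, and the only genuinely new analytic input is the $M \mapsto M^{\leqslant\lambda}$ truncation, so I would establish its properties first. For part (1), right exactness of $M \mapsto M^{\leqslant\lambda}$ on $\mathcal W$ follows formally: $M \mapsto M^{\nleqslant\lambda}$ is the functor sending $M$ to the submodule generated by $\bigoplus_{\mu \not\leqslant \lambda} M^\mu$; given a surjection $M \epi N$, the image of $M^{\nleqslant\lambda}$ is exactly $N^{\nleqslant\lambda}$ because weight spaces are preserved under surjections and generation is preserved under images, and then passing to quotients gives a surjection $M^{\leqslant\lambda} \epi N^{\leqslant\lambda}$ whose kernel is the image of $\Ker(M \epi N)$; right exactness is then a diagram chase. (Left exactness fails, which is why we only get right exactness.)

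For part (2), suppose $M \in \mathcal O$. Then $M \otimes L(e_n)^*$ is a weight module with finite-dimensional weight spaces (each weight space is a finite sum of tensor products of finite-dimensional spaces, using Proposition \ref{l-e-n}(2)) and it is locally $U(\mathfrak n_+)$-nilpotent since both tensor factors are — $L(e_n)^*$ is locally $\mathfrak n_+$-finite as the dual of a module whose support is bounded above in the relevant direction. The only failure of membership in $\mathcal O$ is finite generation, and the truncation $(-)^{\leqslant\lambda}$ is designed precisely to fix this: arguing on a Verma flag of the projective cover $P$ of $M$ as in the proof of Proposition \ref{prop-odd-kas}, one reduces to $M = M(\mu)$, computes that $M(\mu) \otimes L(e_n)^*$ has a (generalized Verma) filtration with subquotients $M(\mu - e_n + \sum_{i=1}^{n-1} b_i(e_i - e_n))$, $b_i \in \Z_{\ge 0}$, and observes that only finitely many of these have highest weight $\leqslant \lambda$; hence $(M(\mu)\otimes L(e_n)^*)^{\leqslant\lambda}$ has a finite Verma flag and lies in $\mathcal O$.

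For part (3), the adjunction $\Hom_{\mathcal W}(\overline{\mathcal F}_i M, N) \cong \Hom_{\mathcal W}(M, \overline{\mathcal E}_{i-1} N)$ for $M \in \mathcal O_{i;n}$, $N \in \mathcal W_{i-1;n}$ is assembled from three standard adjunctions: the tensor-hom adjunction $\Hom(M \otimes L(e_n)^*, -) \cong \Hom(M, - \otimes L(e_n))$ (using $(L(e_n)^*)^* \cong L(e_n)$ on weight modules, valid here by Proposition \ref{l-e-n}(2)); the fact that the projection $\operatorname{pr}_{i-1}$ is both left and right adjoint to the inclusion of the block; and the key point that on modules $N$ supported in the block $\mathcal W_{i-1;n}$, i.e. with $\wt(N) \subset \overline{\omega_i - \rho}$ — wait, the relevant weight is $\omega_{i-1} - \rho$ — one has $\Hom_{\mathcal W}(Q^{\leqslant(\omega_i - \rho)}, N) = \Hom_{\mathcal W}(Q, N)$ because any homomorphism $Q \to N$ kills the weight spaces $Q^\mu$ with $\mu \not\leqslant \omega_i - \rho$: indeed such $\mu$ cannot be hit since $N$'s weights are bounded appropriately, so the map factors through $Q^{\leqslant(\omega_i-\rho)}$. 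Composing the three gives the adjunction. The computation $\overline{\mathcal F}_i(M(a_1,\ldots,a_n)) = M(a_1,\ldots,a_{n-1},1)$ if $a_n = 2$ and $0$ if $a_n = 1$ then follows from the filtration of $M(\lambda) \otimes L(e_n)^*$ described above: among the subquotients $M(\lambda - e_n + \sum b_i(e_i - e_n))$, exactly one — the one with all $b_i = 0$ — lies in the correct block and survives the truncation, and a short highest-weight-coordinate bookkeeping as in Proposition \ref{prop-odd-kas}(2) identifies it, with everything else either in the wrong block (killed by $\operatorname{pr}_{i-1}$) or above $\omega_i - \rho$ (killed by truncation). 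Part (4) is then immediate: $\overline{\mathcal F} = \bigoplus_i \overline{\mathcal F}_i$ and $\overline{\mathcal E} = \bigoplus_i \overline{\mathcal E}_i$ with $\overline{\mathcal F}_i$ left adjoint to $\overline{\mathcal E}_{i-1}$, so $\overline{\mathcal F}$ is left adjoint to $\overline{\mathcal E}$ on $\mathcal O_n = \bigoplus_i \mathcal O_{i;n}$, and in particular the target of $\overline{\mathcal F}$ lands in $\mathcal W_n$ as claimed. I expect the main obstacle to be part (3): carefully justifying that the truncation does not interfere with the adjunction — i.e. that $\Hom(Q^{\leqslant(\omega_i-\rho)}, N) = \Hom(Q, N)$ for $N$ in the relevant block — and correctly tracking which weight-lattice coset and which highest weights appear, so that the truncation level $\omega_i - \rho$ is exactly the right threshold to isolate the single surviving Verma subquotient.
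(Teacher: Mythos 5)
Your part (1) is fine, and your reduction-to-Verma-flag strategy is the right skeleton, but two of your key supporting claims are false and they are exactly the points the paper outsources to K\r{a}hrstr\"om. In part (2) you assert that $M\otimes L(e_n)^*$ has finite-dimensional weight spaces and is locally $U(\mathfrak n_+)$-nilpotent before truncation. Neither holds: already for $n=2$, $\mathrm{Supp}(M(\lambda))=\lambda-\Z_{\ge0}\alpha$ and $\mathrm{Supp}(L(e_2)^*)=-e_2+\Z_{\ge0}\alpha$, so each weight space of $M(\lambda)\otimes L(e_2)^*$ is an infinite direct sum; and $L(e_n)^*$ is a \emph{lowest} weight module on which the root vector for $e_{n-1}-e_n$ acts injectively, so it is not locally $\mathfrak n_+$-finite. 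Likewise, the ``generalized Verma filtration'' of the full tensor product with subquotients $M(\mu-e_n+\sum b_i(e_i-e_n))$ does not exist in any useful sense: each $e_i-e_n$ is a \emph{positive} root, so these candidate highest weights are unbounded above and cannot be arranged into a flag. What is true, and what the paper uses, is the weaker statement that $M(\mathbf a)\otimes L(e_n)^*=\bigoplus_b U(\mathfrak n_-)(v\otimes v_b^*)$ as a free $U(\mathfrak n_-)$-module, combined with \cite[Prop.~2.10]{Kah}, which says the truncation $(-)^{\leqslant\lambda}$ simply keeps the summands with $\wt(v\otimes v_b^*)\leqslant\lambda$; the $e_n$-coordinate computation then isolates $b=0$, $a_n=2$. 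Your bookkeeping for the formula in (3) reaches the right answer, but it rests on the nonexistent filtration.

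The more serious gap is in your adjunction argument for (3). The ``tensor-hom adjunction'' $\Hom(M\otimes L(e_n)^*,N)\cong\Hom(M,N\otimes L(e_n))$ is \emph{not} a standard adjunction when $L(e_n)$ is infinite-dimensional: the inverse map would have to send $g$ to $m\mapsto\sum_b g(m\otimes v_b^*)\otimes v_b$, which is an infinite sum unless almost all terms vanish. This failure is precisely why the truncation appears in the definition of $\overline{\mathcal F}_i$ in the first place, so the truncation cannot be treated as a nuisance to be ``shown not to interfere'' with a pre-existing adjunction; it is what makes the adjunction possible, and establishing it requires the analysis of \cite[Thm.~3.4]{Kah} (which the paper cites rather than reproduces). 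Your hesitation over whether the relevant threshold is $\omega_i-\rho$ or $\omega_{i-1}-\rho$ is a symptom of the same issue: the weight bookkeeping that makes $\Hom(Q^{\leqslant(\omega_i-\rho)},N)=\Hom(Q,N)$ for $N$ in the target block is not settled in your write-up. Part (4) is indeed immediate once (3) is in place.
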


\begin{proof}
Part (1) is \cite[Lemma 2.9]{Kah}, while part  (2) is \cite[Corollary 2.12]{Kah}.
 For part  (3) we follow the proof of \cite[Theorem 3.4]{Kah}.
  Note that Theorem 3.4 in \cite{Kah} is for the principal block ${\mathcal O}_0$ of ${\mathcal O}$, namely for the functor $M \mapsto {\rm pr}_{{\mathcal O}_0} \left( M \otimes L(e_n)^* \right)^{\leqslant 0}$ but the same reasoning applies for the block ${\mathcal O}_{i;n}$.
  To find  ${\rm pr}_{i-1} \left( M(a_1,...,a_n) \otimes L(e_n)^* \right)^{\leqslant(\omega_i-\rho)}$ we first use Proposition \ref{l-e-n} and fix a basis $v_{b}$, $b = (b_1,...,b_{n-1}) \in \left({\mathbb Z}_{\geq 0}\right)^{n-1}$, such that $\wt (v_b) = e_n + \sum_{j=1}^{n-1} b_j (e_n - e_j)$. Then the set $\{ v_b^* \; | \; b \in \left({\mathbb Z}_{\geq 0}\right)^{n-1} \}$ forms a basis of $L(e_n)^*$. Thus, if $v$ is a highest weight vector of $M(a_1,...,a_n)$ then
$$
M(a_1,...,a_n) \otimes L(e_n)^* = \bigoplus_{b} U({\mathfrak n}_-) (v \otimes v_b^*)
$$
as $U({\mathfrak n}_-)$-modules. Now using \cite[Proposition 2.10]{Kah} we have that
$$
\left( M(a_1,...,a_n) \otimes L(e_n)^*\right)^{\leqslant(\omega_i-\rho)}= \bigoplus_{{\rm wt}(v \otimes v_b^*) \leqslant \omega_i-\rho} U({\mathfrak n}_-) (v \otimes v_b^*)
$$
Since the $e_n$-coordinate of $\wt(v \otimes v_b^*) + \rho$ is $a_n - 1 - \sum_{j=1}^{n-1}b_j$, we have that $\wt(v \otimes v_b^*) + \rho \leq \omega_i$ only if $a_n - 1 - \sum_{j=1}^{n-1}b_j \geq 1$. Hence $a_n = 2$ and $b_1=...=b_{n-1} = 0$. This completes the proof of  (3). Part (4) follows from part (3).
\end{proof}

\vskip 3mm
 Set
$$\tfone([S]) :=[\rm{hd}  \overline{\mathcal F} (S)].$$
\begin{remark}
One easily checks that even for $n=2$, $ [{\rm hd}\, \overline{\mathcal F}(S)]$ might be different from $ [\overline{\mathcal F}(S)]$. Indeed, if $S = L(2,2)$, then by Proposition \ref{prop-kah}(3),
$$
[\overline{\mathcal F}(L(2,2))] = [\overline{\mathcal F}(M(2,2))] = [M(2,1)] = [L(2,1)] + [L(1,2)].
$$
\end{remark}

\begin{lemma} \label{lem-odd-kas-f}  \rm{
For $a_1,\ldots ,a_{n-1} \in \{1,2\}$, we have
\begin{align*}
&\overline{\mathcal F}(L(a_1,\ldots,a_{n-1},1))=0 \ \text{and} \
{\rm hd}\, \overline{\mathcal F}(L(a_1,\ldots,a_{n-1},2)) = L(a_1,\ldots,a_{n-1},1).
\end{align*}}
\end{lemma}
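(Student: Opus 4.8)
The plan is to deduce Lemma~\ref{lem-odd-kas-f} from the biadjunction between $\overline{\mathcal E}$ and $\overline{\mathcal F}$ (Proposition~\ref{prop-kah}(4)) together with the explicit computation of $\overline{\mathcal E}$ on simple modules (Proposition~\ref{prop-odd-kas}(3)), using the standard Hom-functor bookkeeping for heads and socles exactly as was done for $\mathcal E$, $\mathcal F$ in Section~2. First I would treat the vanishing statement. If $a_n=1$, then for \emph{every} simple $S' = L(b_1,\ldots,b_{n-1},b_n)$ in $\mathcal O_n$ we have
$$
\Hom_{\mathcal O_n}\big(\overline{\mathcal F}(L(a_1,\ldots,a_{n-1},1)),\, S'\big) \cong \Hom_{\mathcal O_n}\big(L(a_1,\ldots,a_{n-1},1),\, \overline{\mathcal E}(S')\big).
$$
By Proposition~\ref{prop-odd-kas}(3), $\overline{\mathcal E}(S')$ is either $0$ or a simple module whose last index is $2$; in either case it cannot have $L(a_1,\ldots,a_{n-1},1)$ as a submodule, so all these Hom-spaces vanish and hence $\overline{\mathcal F}(L(a_1,\ldots,a_{n-1},1))=0$. (One should note here that $\overline{\mathcal F}$ genuinely lands in $\mathcal W_n$, but by Proposition~\ref{prop-kah}(2) it lands in $\mathcal O_n$, so the argument using simple objects of $\mathcal O_n$ suffices; this is the one point I would state carefully.)

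Next I would handle the head computation for $a_n=2$. Write $S = L(a_1,\ldots,a_{n-1},2)$ and let $T = {\rm hd}\,\overline{\mathcal F}(S)$, a (possibly zero, a priori not obviously simple) quotient; by the general CR-type socle/head argument I will in fact get that the head is simple, but the cleanest route is: for a simple $S'$,
$$
\Hom_{\mathcal O_n}\big(\overline{\mathcal F}(S),\, S'\big) \cong \Hom_{\mathcal O_n}\big(S,\, \overline{\mathcal E}(S')\big),
$$
and by Proposition~\ref{prop-odd-kas}(3) the right-hand side is nonzero precisely when $S' = L(b_1,\ldots,b_{n-1},1)$ with $\overline{\mathcal E}(S') = L(b_1,\ldots,b_{n-1},2) \cong S$, i.e.\ precisely when $(b_1,\ldots,b_{n-1}) = (a_1,\ldots,a_{n-1})$, in which case the Hom-space is one-dimensional since $S$ is simple. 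Therefore the head of $\overline{\mathcal F}(S)$ is exactly $L(a_1,\ldots,a_{n-1},1)$, which is the claim.

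The main obstacle, as I see it, is making sure $\overline{\mathcal F}(S) \neq 0$ so that the head is genuinely this simple module and not $0$: the Hom-space computation above shows there is a nonzero map $\overline{\mathcal F}(S) \to L(a_1,\ldots,a_{n-1},1)$, which already forces $\overline{\mathcal F}(S)\neq 0$, so this resolves itself — but I would double-check it rather than assume it. A secondary point worth a sentence is why $\overline{\mathcal F}(S)$ has well-defined (finite) head at all, i.e.\ that it is a genuine object of $\mathcal O_n$; this is Proposition~\ref{prop-kah}(2),(4). If one wants the stronger statement that $\overline{\mathcal F}(S)$ has simple head \emph{and} simple socle isomorphic to it (paralleling Proposition on $\mathcal E$, $\mathcal F$), one can either invoke the abstract machinery or observe directly from the Grothendieck-group computation $[\overline{\mathcal F}(L(a_1,\ldots,a_{n-1},2))] = [M(a_1,\ldots,a_{n-1},1)]$ via Proposition~\ref{prop-kah}(3) that $L(a_1,\ldots,a_{n-1},1)$ occurs with multiplicity one, pinning down the socle as well; but for the lemma as stated only the head computation above is needed.
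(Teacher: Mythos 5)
Your proposal is correct and follows essentially the same route as the paper: the paper likewise deduces the lemma from the adjunction $\Hom(\overline{\mathcal F}(S),S')\cong\Hom(S,\overline{\mathcal E}(S'))$ of Proposition~\ref{prop-kah}(4) combined with the computation of $\overline{\mathcal E}$ on simple modules in Proposition~\ref{prop-odd-kas}(3). The only cosmetic difference is that the paper establishes simplicity of ${\rm hd}\,\overline{\mathcal F}(S)$ by observing, via Proposition~\ref{prop-kah}(3) and right-exactness, that $\overline{\mathcal F}$ sends simples to highest weight modules, whereas you read it off from the one-dimensionality of the relevant Hom-spaces; both work, and note that only the left adjunction (which is all Proposition~\ref{prop-kah}(4) asserts), not a biadjunction, is needed.
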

\begin{proof}
By Proposition \ref{prop-kah}(3), we know that $\overline{\mathcal F}$ maps a simple module in ${\mathcal O}_n$ to a highest weight module in ${\mathcal O}_n$ or $0$. Hence $\overline{\mathcal F}(S)$ has a simple head for $S \in {\rm Irr}( {\mathcal  O}_n )$,  if it is nonzero.  Now the assertion follows from  Proposition \ref{prop-odd-kas}(3) and Proposition \ref{prop-kah}(4). \end{proof}

\begin{theorem} \label{main} \hfill
\rm
\begin{enumerate}

\item There is a $\mathfrak{q}(2)$-crystal structure on
$\text{Irr}(\mathcal O_n)$ with odd Kashiwara operators $\teone$ and
$\tfone$ given above.

\item As a $\mathfrak{q}(2)$-crystal, $\text{Irr}(\mathcal O_n)$ is
isomorphic to $\B^{\otimes n}$.

\end{enumerate}
\end{theorem}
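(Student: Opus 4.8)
The plan is to establish a $\mathfrak{q}(2)$-crystal isomorphism $\text{Irr}(\mathcal O_n) \isoto \B^{\otimes n}$ extending the known $\mathfrak{gl}_2$-crystal isomorphism of Theorem \ref{thm:gl2-crystal}. Concretely, I would take the underlying bijection to be $\Phi : [L(a_1,\ldots,a_n)] \mapsto b_{a'_1} \otimes \cdots \otimes b_{a'_n}$ (with $1'=2$, $2'=1$), which is already an isomorphism of $\mathfrak{gl}_2$-crystals by Theorem \ref{thm:gl2-crystal}, and show it intertwines the odd Kashiwara operators $\teone, \tfone$ on both sides. Since both structures are $\mathfrak{gl}_2$-crystals for which $\vphi, \eps, \wt$ already match, part (1) — that $\text{Irr}(\mathcal O_n)$ really is a $\mathfrak{q}(2)$-crystal — will follow once we verify it satisfies the three axioms in Definition \ref{sec:q2crystal}'s definition of $\mathfrak{q}(2)$-crystal, and in fact the cleanest route is to deduce (1) and (2) simultaneously: define $\teone, \tfone$ on $\text{Irr}(\mathcal O_n)$ via $\overline{\mathcal E}, \overline{\mathcal F}$ as in Section 4, transport them through $\Phi$, and check the result agrees with the queer tensor product rule on $\B^{\otimes n}$, which is known to be a $\mathfrak{q}(2)$-crystal by the boxed Theorem of Section \ref{sec:q2crystal}.

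The key computational input is already available: Proposition \ref{prop-odd-kas}(3) and Lemma \ref{lem-odd-kas-f} compute $\teone$ and $\tfone$ on simples explicitly in terms of the last coordinate, namely $\teone([L(a_1,\ldots,a_{n-1},1)]) = [L(a_1,\ldots,a_{n-1},2)]$ and $\teone([L(\ldots,2)])=0$, with the symmetric statement for $\tfone$ via the head of $\overline{\mathcal F}$. So the heart of the proof is the following purely combinatorial claim: under the bijection $b_{a_1}\otimes\cdots\otimes b_{a_n} \mapsto b_{a'_1}\otimes\cdots\otimes b_{a'_n}$ of $\B^{\otimes n}$ to itself (the map $\psi$ of the Remark after Theorem \ref{thm:gl2-crystal}), the operator "act by $\teone$ on the last tensor factor only" matches the queer tensor product rule \eqref{eq2:tensor product} for $\teone$ on $\B^{\otimes n}$, and similarly for $\tfone$. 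This requires understanding when $\teone(b_1\otimes\cdots\otimes b_n)$ under the queer rule acts on the last factor versus an earlier factor — the rule says $\teone$ acts on the rightmost factor unless the weight of that factor is zero, and iterating, it acts on the rightmost factor whose weight is nonzero; but $b_{a'_n}$ has nonzero weight always (it is $b_1$ or $b_2$), so the queer rule for $\teone$ on $\B^{\otimes n}$ always acts on the last factor. Combined with Theorem \ref{thm:upperglobalbasis} to track how global basis elements (hence the classes $[L(\mathbf a)]$) behave under appending $1$ versus $2$ — precisely the content encoded in \eqref{eq:Ga2} in the proof of Proposition \ref{prop-odd-kas} — this should close the loop.

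I would therefore organize the proof as: (a) recall from Proposition \ref{prop-odd-kas}(3) and Lemma \ref{lem-odd-kas-f} the formulas for $\teone, \tfone$ on $\text{Irr}(\mathcal O_n)$, noting they depend only on $a_n$; (b) compute the queer tensor product action of $\teone, \tfone$ on $\B^{\otimes n}$ restricted to $\B^{\otimes(n-1)}\otimes\B$, observing via \eqref{eq2:tensor product} that since $\wt(b_{a'_n}) \ne 0$ the operators always act on the last factor, where $\teone(b_1) = 0$, $\teone(b_2)=b_1$, $\tfone(b_1)=b_2$, $\tfone(b_2)=0$; (c) match these under $\Phi$ using $1'=2, 2'=1$, so that $a_n = 1 \leftrightarrow$ last factor is $b_2 \leftrightarrow \teone$ acts nontrivially, exactly as in Proposition \ref{prop-odd-kas}; (d) conclude that $\Phi$ intertwines all six structure maps, hence $\text{Irr}(\mathcal O_n)$ inherits the $\mathfrak{q}(2)$-crystal axioms from $\B^{\otimes n}$ (giving (1)) and is isomorphic to it (giving (2)). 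The main obstacle I anticipate is part (b)–(c): one must be careful that $\teone, \tfone$ on $\text{Irr}(\mathcal O_n)$ as defined via functors are only guaranteed to satisfy axiom (iii) of the $\mathfrak{q}(2)$-crystal definition — $\tfone b = b' \iff b = \teone b'$ — after one verifies biadjointness-type compatibility; this is supplied by Proposition \ref{prop-kah}(4) ($\overline{\mathcal F}$ left adjoint to $\overline{\mathcal E}$) together with the head/socle argument exactly paralleling the $\mathfrak{sl}_2$ case (the Hom-adjunction computation reproduced after Theorem \ref{thm:gl2-crystal}), so I would spell that adjunction argument out carefully: if $\teone([S]) = [S']$ then $0 \ne \Hom(\overline{\mathcal E}(S), S') = \Hom(S, \overline{\mathcal F}(S'))$, forcing $S$ to be the simple submodule, hence the socle, of $\overline{\mathcal F}(S')$, which by the simple-head property equals its head, giving $\tfone([S']) = [S]$.
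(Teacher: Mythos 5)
Your proposal is correct and follows essentially the same route as the paper: both rest on the explicit formulas for $\teone,\tfone$ on simples from Proposition \ref{prop-odd-kas}(3) and Lemma \ref{lem-odd-kas-f}, and both verify the isomorphism with $\B^{\otimes n}$ by observing that the queer tensor product rule \eqref{eq2:tensor product} always acts on the last tensor factor (since its weight is never zero), matching the dependence on $a_n$ alone. The paper states this verification as ``one can easily check''; your write-up, including the adjunction argument for axiom (iii) (which the paper delegates to the proof of Lemma \ref{lem-odd-kas-f} via Proposition \ref{prop-kah}), simply makes the same steps explicit.
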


\begin{proof}
Let $\psi$ be the map given by $(a_1, \ldots, a_n) \mapsto a_1'
\otimes \cdots \otimes a_n'$, where $a_i=1$ or $2$, $1'=2$,
$2'=1$.

  For part (1), we use Proposition \ref{prop-odd-kas}(3) and Lemma \ref{lem-odd-kas-f}.

For part (2),
one can easily check that $x [L(a_1,...,a_n)] = [L(\psi^{-1}  x \psi
(a_1,...,a_n))]$ for $x = \tfone, \teone$, whenever $x [L(a_1,...,a_n)]
\neq 0$.
\end{proof}

\vskip 5mm

%%%%%%%%%%%%%%%%%%%%%%%%%%%%%%%%%%%%%%%%%%%%%%%%%%%%%%%%%%%%%%%%%%%%%%%%%

\section{Invariants of  connected components}
One of the important properties of the $\mathfrak{gl}_2$-crystal
structure of ${\rm Irr} (\mathcal O_n)$ is that the isomorphism
classes of simple objects in a fixed  parabolic subcategory of
$\mathcal O_n$ form a $\mathfrak{gl}_2$-subcrystal of ${\rm Irr}
(\mathcal O_n)$. A similar but slightly weaker statement holds for
the $\mathfrak{q} (2)$-crystal ${\rm Irr} (\mathcal O_n)$. To
formulate this statement, we need to introduce some notation.

For a sequence ${\mathbf a}=(a_1,...,a_n)$ of $1$'s and $2$'s, let
$$
I_{\rm fin} (a_1,...,a_n) := \{ i  \; | \; a_i = 2 \mbox{ and
}a_{i+1} = 1\}.
$$
In particular, $I_{\rm fin} (a_1,...,a_n)$ is a subset of
$\{1,...,n-1 \}$. Recall that for an irreducible
$\mathfrak{gl}_{n}$-module $M$ in $\mathcal W$ and a root $\alpha$ of
$\mathfrak{gl}_{n}$, every root vector $x$ in the $\alpha$-root space
acts either  injectively  or locally finitely on $M$.  Indeed, this follows from the fact that the set of all $m$ in $M$ for which $x^Nm = 0$ for sufficiently large $N\geq 1$ forms a submodule of $M$.
 For a module $L$ in the category $\mathcal O$, we define
$\Pi_{\rm fin}(L)$ to be the set of simple roots $\alpha$ such that the
 vectors in the $(-\alpha)$-root space act locally finitely on $L$.

\vskip 2mm

For a subset $I$ of $\{1,...,n-1 \}$, denote by ${\mathcal O}_I$ the
parabolic subcategory of ${\mathcal O}$ consisting of all
$\mathfrak{gl}_{n}$-modules $M$ on which the root vectors of $-e_i +
e_{i+1}$  $(i \in I)$ act locally finitely.
Some properties of ${\mathcal O}_I$ related to the $\mathfrak{gl}_{2}$-crystal structure of $\text{Irr}(\mathcal O_n)$  are listed in the following
proposition. We refer the reader to
\cite[Chapter 9]{H} for other important properties of ${\mathcal O}_I$.

\vskip 3mm

\begin{prop}  \rm \label{prop-par-gl} Let $a_i=1$ or $2$   for $i = 1,...,n$.

\begin{enumerate}
\item  $\Pi_{\rm fin} (L(a_1,...,a_n)) = \{ e_i - e_{i+1} \; | \; i \in I_{\rm fin} (a_1,...,a_n)\}.$

\item Let $L$ be an irreducible  $\mathfrak {gl}_{n}$-module whose isomorphism
class belongs to the connected component $C([L(a_1, \ldots, a_n)])$ in
the $\mathfrak{gl}_{2}$-crystal $\text{Irr}(\mathcal O_n)$.

Then  $\Pi_{\rm fin} (L) =\Pi_{\rm fin} (L(a_1,...,a_n))$. In
particular, $L$ belongs to ${\mathcal O}_{I}$, where $I = I_{\rm
fin} (a_1,...,a_n)$.

\item  For every subset $I$ of $\{1,...,n-1 \}$, the isomorphism classes of
irreducible $\mathfrak{gl}_{n}$-modules in $\mathcal O_n \cap
{\mathcal O}_I$ form a $\mathfrak{gl}_2$-subcrystal of ${\rm Irr}
(\mathcal O_n)$.

\end{enumerate}
\end{prop}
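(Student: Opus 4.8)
My plan is to establish (1) from the structure theory of simple highest weight modules, deduce (2) from the fact that the translation functors preserve ``local finiteness of a negative root vector'' in both directions, and obtain (3) as a formal corollary. For (1), I would write $\mu = a_1 e_1 + \cdots + a_n e_n - \rho$, so that $L(a_1,\ldots,a_n) = L(\mu)$ and $\mu + \rho = \sum_j a_j e_j$. Fixing a simple root $\alpha_i = e_i - e_{i+1}$ and a nonzero $f_i$ in the $(-\alpha_i)$-root space, I note that in the weight module $L(\mu)$ the nonzero vectors among $f_i^k m$ $(k \ge 0)$ lie in pairwise distinct weight spaces, so $f_i$ acts locally finitely on $L(\mu)$ if and only if it acts locally nilpotently, and this happens if and only if $\langle \mu, \alpha_i^\vee \rangle \in \Z_{\ge 0}$ (equivalently, $L(\mu)$ lies in the parabolic subcategory attached to $\alpha_i$; see \cite[Chapter 9]{H}). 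Since $\langle \mu, \alpha_i^\vee \rangle = (a_i - a_{i+1}) - 1$, this is $\ge 0$ exactly when $a_i = 2$ and $a_{i+1} = 1$, i.e.\ when $i \in I_{\rm fin}(a_1,\ldots,a_n)$, which is (1).

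The substantive step is (2). Here the key lemma I would prove is: for any finite-dimensional $\mathfrak{gl}_n$-module $V$, any $M \in \mathcal W$, and any $i$, the vector $f_i$ acts locally nilpotently on $M$ if and only if it acts locally nilpotently on $M \otimes V$. The forward direction follows from the binomial identity $f_i^N(m \otimes v) = \sum_k \binom{N}{k} f_i^k m \otimes f_i^{N-k} v$, which vanishes for $N \gg 0$ when $m$ is a weight vector, $f_i$ is locally nilpotent on $M$, and $V$ is finite-dimensional; the reverse direction follows because $M$ embeds, via the $\mathfrak{gl}_n$-linear coevaluation $m \mapsto \sum_k m \otimes v_k \otimes v_k^*$, as a submodule of $(M \otimes V) \otimes V^*$, and local nilpotence of $f_i$ descends to submodules. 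Since block projections, heads, and socles produce from a module a direct summand, a quotient, and a submodule respectively, each inherits local nilpotence of $f_i$. Now given $[S] \in {\rm Irr}(\mathcal O_n)$ with $\te([S]) = [S'] \ne 0$, we have $S' \cong {\rm hd}\,\mathcal E(S)$ and, as explained in Section 2, $S \cong {\rm soc}\,\mathcal F(S')$. If $\alpha_i \in \Pi_{\rm fin}(S)$, then $f_i$ is locally nilpotent on $S \otimes L(e_1)$, hence on the summand $\mathcal E(S)$, hence on its quotient $S'$; conversely, if $\alpha_i \in \Pi_{\rm fin}(S')$, then $f_i$ is locally nilpotent on $S' \otimes L(e_1)^*$, hence on $\mathcal F(S')$, hence on its submodule $S$. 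Thus $\Pi_{\rm fin}(S) = \Pi_{\rm fin}(S')$, and by the crystal relation ($\tf b = b'$ iff $b = \te b'$) the same holds for $\tf$. As a connected component of the $\mathfrak{gl}_2$-crystal is generated from any of its elements by $\te$ and $\tf$, the invariant $\Pi_{\rm fin}$ is constant on it, which combined with (1) gives (2).

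Finally, (3) should be formal: an irreducible $L(a_1,\ldots,a_n)$ in $\mathcal O_n$ lies in $\mathcal O_I$ precisely when $e_i - e_{i+1} \in \Pi_{\rm fin}(L(a_1,\ldots,a_n))$ for all $i \in I$, i.e.\ by (1) precisely when $I \subseteq I_{\rm fin}(a_1,\ldots,a_n)$; and by (2) the set $I_{\rm fin}$ is constant on each connected component of ${\rm Irr}(\mathcal O_n)$. Hence the isomorphism classes of irreducibles in $\mathcal O_n \cap \mathcal O_I$ form a union of connected components, so in particular a $\mathfrak{gl}_2$-subcrystal. I expect the main obstacle to be the bidirectionality in (2): preserving local finiteness forward along $\mathcal E$ (through the head) is easy, but the reverse requires the $\mathcal E$--$\mathcal F$ biadjunction together with the socle identification $S \cong {\rm soc}\,\mathcal F(S')$ rather than $\mathcal E$ alone.
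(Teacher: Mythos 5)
Your proof is correct and takes essentially the same route the paper sketches: part (1) is the standard criterion $\langle \lambda,\alpha^\vee\rangle\in\Z_{\ge 0}$ for local finiteness of $f_\alpha$ on $L(\lambda)$, and parts (2)--(3) rest on the fact that local finiteness of a negative simple root vector passes to $L\otimes L(e_1)$ and $L\otimes L(e_1)^*$ and hence to heads and socles of their block summands, making $\Pi_{\rm fin}$ constant on connected components. Your coevaluation argument for the converse of the tensor lemma is a nice extra, but, as your own argument shows, it is not needed once one uses both identifications $S'\cong {\rm hd}\,\mathcal E(S)$ and $S\cong{\rm soc}\,\mathcal F(S')$.
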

\begin{proof}
Part (1) is a standard fact.  For parts (2) and (3), we  use Theorem \ref{thm:gl2-crystal}  or the fact that if $\alpha \in \Pi_{\rm fin} (L)$ and
 $x$ is in the $(-\alpha)$-root space then $x$ acts locally finitely on $L \otimes L(e_1)$ and $L \otimes L(e_1)^*$.
 \end{proof}

\vskip 2mm

The $\mathfrak{q} (2)$-version of the above proposition is the
following.

\vskip 2mm

\begin{prop}  \rm
Let $a_i=1$ or $2$ for $i = 1,...,n$.

\begin{enumerate}

\item If $\overline{\mathcal E}(L(a_1, \ldots, a_n)) \neq 0$ (equivalently, $a_n=1$), then
$$\Pi_{\rm fin}(\overline{\mathcal E}(L(a_1, \ldots, a_n))) =
\Pi_{\rm fin}(L(a_1, \ldots, a_n)) \setminus \{e_{n-1} - e_{n} \}.$$

\item
Let $L(b_1, \ldots, b_n)$ $(b_i=1, 2)$ be the irreducible
$\mathfrak{gl}_n$-module whose isomorphism class belongs to the
connected component $C([L(a_1, \ldots, a_n)])$ in the
$\mathfrak{q}(2)$-crystal $\text{Irr}(\mathcal O_n)$.

\vskip 2mm

Then $L(b_1, \ldots, b_n)$ belongs to ${\mathcal O}_{I}$, where
$I=I_{\rm fin} (a_1,...,a_n) \setminus \{ n-1\}$.

\vskip 3mm

\item For every subset $I$ of $\{1,...,n-2 \}$, the
isomorphism classes of irreducible $\mathfrak{gl}_{n}$- modules in
$\mathcal O_n \cap {\mathcal O}_I$ form a  ${\mathfrak q}
(2)$-subcrystal of ${\rm Irr} (\mathcal O_n)$.

\end{enumerate}
\end{prop}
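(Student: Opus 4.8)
The plan is to reduce all three parts to the combinatorial formula for $\Pi_{\rm fin}$ on simple objects (Proposition \ref{prop-par-gl}(1)) together with the explicit action of $\overline{\mathcal E}$ and $\overline{\mathcal F}$ on simple modules (Proposition \ref{prop-odd-kas}(3) and Lemma \ref{lem-odd-kas-f}). The elementary fact I will use repeatedly is that replacing the last entry of a sequence $(a_1,\ldots,a_{n-1},\ast)$ does not change the membership of any index $i\le n-2$ in $I_{\rm fin}$, and affects only the index $n-1$: one has $n-1\in I_{\rm fin}(a_1,\ldots,a_{n-1},1)$ iff $a_{n-1}=2$, while $n-1\notin I_{\rm fin}(a_1,\ldots,a_{n-1},2)$.

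For part (1): since $a_n=1$, Proposition \ref{prop-odd-kas}(3) gives $\overline{\mathcal E}(L(a_1,\ldots,a_n))=L(a_1,\ldots,a_{n-1},2)$, again a simple object of $\mathcal O_n$. By the remark above, $I_{\rm fin}(a_1,\ldots,a_{n-1},2)=I_{\rm fin}(a_1,\ldots,a_n)\setminus\{n-1\}$, and inserting this into Proposition \ref{prop-par-gl}(1) gives the asserted identity $\Pi_{\rm fin}(\overline{\mathcal E}(L(a_1,\ldots,a_n)))=\Pi_{\rm fin}(L(a_1,\ldots,a_n))\setminus\{e_{n-1}-e_n\}$.

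For part (2): I will prove the stronger statement that the set $J(L)\seteq\Pi_{\rm fin}(L)\setminus\{e_{n-1}-e_n\}$ is constant along each connected component of the $\mathfrak{q}(2)$-crystal ${\rm Irr}(\mathcal O_n)$. It is enough to verify that each of $\te,\tf,\teone,\tfone$ either sends a vertex to $0$ or preserves $J$. For $\te$ and $\tf$ this is immediate, since they keep us inside a single $\mathfrak{gl}_2$-connected component, on which $\Pi_{\rm fin}$ (hence $J$) is constant by Proposition \ref{prop-par-gl}(2). For $\teone$, part (1) says exactly that it deletes $e_{n-1}-e_n$ from $\Pi_{\rm fin}$ and fixes the rest, so $J$ is unchanged. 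For $\tfone$ — and, again, $\teone$ — Proposition \ref{prop-odd-kas}(3) and Lemma \ref{lem-odd-kas-f} show that on a simple object $[L(a_1,\ldots,a_n)]$ these operators give either $0$ or $[L(a_1,\ldots,a_{n-1},x)]$ for some $x\in\{1,2\}$, and by the opening remark the only index whose membership in $I_{\rm fin}$ can change is $n-1$, so $J$ is preserved. Hence, if $[L(b_1,\ldots,b_n)]\in C([L(a_1,\ldots,a_n)])$, then $J(L(b_1,\ldots,b_n))=J(L(a_1,\ldots,a_n))=\{e_i-e_{i+1}\mid i\in I\}$ with $I=I_{\rm fin}(a_1,\ldots,a_n)\setminus\{n-1\}\subseteq\{1,\ldots,n-2\}$; since none of these roots equals $e_{n-1}-e_n$, this set lies in $\Pi_{\rm fin}(L(b_1,\ldots,b_n))$, i.e. $L(b_1,\ldots,b_n)\in\mathcal O_I$. (Alternatively one could transport the question to $\B^{\otimes n}$ via Theorem \ref{main} and read it off Proposition \ref{prop-c-l}, but the invariance argument above is more direct.)

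For part (3): let $I\subseteq\{1,\ldots,n-2\}$. The simple objects of $\mathcal O_n$ are the $L(a_1,\ldots,a_n)$, and by Proposition \ref{prop-par-gl}(1) such a module lies in $\mathcal O_I$ exactly when $I\subseteq I_{\rm fin}(a_1,\ldots,a_n)$. Closure of this subset under $\te,\tf$ is precisely Proposition \ref{prop-par-gl}(3). For $\teone,\tfone$, Proposition \ref{prop-odd-kas}(3) and Lemma \ref{lem-odd-kas-f} again send $[L(a_1,\ldots,a_n)]$ to $0$ or to $[L(a_1,\ldots,a_{n-1},x)]$ with $x\in\{1,2\}$; since $I\subseteq\{1,\ldots,n-2\}$ and flipping the last letter leaves the indices $\le n-2$ in $I_{\rm fin}$ untouched, we still have $I\subseteq I_{\rm fin}(a_1,\ldots,a_{n-1},x)$. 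Hence the subset is a $\mathfrak{q}(2)$-subcrystal. The one point that needs care in the whole argument — and where I expect the bulk of the thinking to lie — is recognising that it is $J(L)$, not $\Pi_{\rm fin}(L)$, that is the correct connected-component invariant in the $\mathfrak{q}(2)$-setting, i.e. that $n-1$ is the unique simple-root index whose behaviour under the odd operators is not controlled; after that, everything is bookkeeping for $I_{\rm fin}$ under a flip of the final entry, plus keeping track of the degenerate cases where an operator vanishes.
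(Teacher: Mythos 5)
Your proof is correct and follows essentially the same route as the paper's (very terse) argument: reduce everything to the combinatorial formula $\Pi_{\rm fin}(L(\mathbf a))=\{e_i-e_{i+1}\mid i\in I_{\rm fin}(\mathbf a)\}$ together with the fact that $\teone$ and $\tfone$ only flip the last entry of the sequence, which can only affect the index $n-1$ in $I_{\rm fin}$. Your write-up simply supplies the bookkeeping that the paper leaves implicit, including the useful observation that $\Pi_{\rm fin}(L)\setminus\{e_{n-1}-e_n\}$ is the correct invariant of $\mathfrak{q}(2)$-connected components.
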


\begin{proof}
Part (1) follows from Theorem \ref{main}(2) and Proposition
\ref{prop-par-gl}(1). Parts  (2) and (3) follow from (1).
\end{proof}

 We finish this section with a result on the decomposition of the ${\mathfrak q} (2)$-connected components of $\B^{\otimes n}$ into $\mathfrak{gl}_2$-connected components.

\begin{prop} \rm Let $\ell$ be a ${\mathfrak q}(2)$-lowest weight vector in $\B^{\otimes n}$ with $|\widehat \ell| \ge
2$. Then $C([L(\ell')]) = A \sqcup B$, where $A$ and $B$ are the
following    $\mathfrak{gl}_2$-subcrystals, which are connected in
$\rm{Irr}(\mathcal{O}_n)$.
\begin{eqnarray*}
 A & = & \{ [L(a')] \; |\; \overline a = \overline \ell,  I_{\rm fin}(\widehat a')  = \emptyset   \}, \\
 B& = & \{ [L(a')] \; | \; \overline a = \overline \ell, I_{\rm fin}(\widehat a') = \{ | \widehat \ell | - 1\}  \}.
\end{eqnarray*}
\end{prop}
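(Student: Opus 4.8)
The plan is to transport everything to $\B^{\otimes n}$ through the $\mathfrak q(2)$-crystal isomorphism ${\rm Irr}(\mathcal O_n)\isoto\B^{\otimes n}$, $[L(\mathbf a)]\mapsto \mathbf a'$, of Theorem \ref{main}(2); under it $C([L(\ell')])$ corresponds to $C(\ell)$ and $[L(a')]$ corresponds to $a$, so it suffices to describe $C(\ell)\subseteq\B^{\otimes n}$ and its $\mathfrak{gl}_2$-connected components. Write $m\seteq|\widehat\ell|$. The first step is to identify $\widehat\ell$: by Proposition \ref{cor_strict reverse lattice permutation} the vector $\ell$ is a strict reverse lattice permutation, so in the decomposition $\ell=u_1\cdots u_s\,2$ of Proposition \ref{prop-c-l}(1) the positions deleted in forming $\widehat\ell$ are exactly those lying in the trivial-lattice-permutation blocks; hence $\widehat\ell$ is the concatenation of the maximal runs of $2$'s together with the terminal $2$, i.e. $\widehat\ell=2^m$, with $m\ge 2$ by hypothesis. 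By Proposition \ref{prop-c-l}(2) the map $b\mapsto\widehat b$ is a bijection $C(\ell)\to C(\widehat\ell)=C(2^m)$ commuting with $\te,\tf,\teone,\tfone$, hence an isomorphism of $\mathfrak{gl}_2$-crystals, and it transports the decomposition of $C(2^m)$ into $\mathfrak{gl}_2$-connected components to one of $C(\ell)$.

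Next I would make $C(2^m)$ explicit. A direct computation with the tensor product rule gives $\te^{m}(2^m)=1^m$, so $2^m\in C(1^m)$ and $C(2^m)=C(1^m)$; Proposition \ref{prop-easy-dec} (applied with $r=m$) then yields
$$C(2^m)=C_{\mathfrak{gl}_2}(1^m)\ \sqcup\ C_{\mathfrak{gl}_2}(1^{m-1}2)$$
as $\mathfrak{gl}_2$-crystals. From the proof of Proposition \ref{prop-easy-dec} one reads off $C_{\mathfrak{gl}_2}(1^m)=\set{2^x 1^{m-x}}{0\le x\le m}$ and, tracking the vectors $\tf^y\tfone\tf^x(1^m)$, $C_{\mathfrak{gl}_2}(1^{m-1}2)=\set{2^y 1^{m-1-y}2}{0\le y\le m-2}$; these are disjoint of sizes $m+1$ and $m-1$.

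The final step is to recognize $A$ and $B$ as the preimages of these two components. Removing positions commutes with $(\cdot)'$, so $\widehat{a'}=(\widehat a)'$. For a word $\mathbf c$ in $\{1,2\}$ one checks at once that $I_{\rm fin}(\mathbf c)=\emptyset$ iff $\mathbf c=1^p2^q$, and that for $\mathbf c$ of length $m$ one has $I_{\rm fin}(\mathbf c)=\{m-1\}$ iff $\mathbf c=1^p2^q1$ with $q\ge 1$. Combining this with the word-forms above, for $a\in C(\ell)$ (so that $\widehat a\in C(2^m)$ and $\overline a=\overline\ell$ automatically, by Proposition \ref{prop-c-l}(2)(a)) one gets $\widehat a\in C_{\mathfrak{gl}_2}(1^m)\iff I_{\rm fin}(\widehat{a'})=\emptyset$ and $\widehat a\in C_{\mathfrak{gl}_2}(1^{m-1}2)\iff I_{\rm fin}(\widehat{a'})=\{m-1\}$; conversely, for any $a$ satisfying $\overline a=\overline\ell$ together with either $I_{\rm fin}$-condition, the resulting $\widehat a$ already lies in $C(2^m)$, so $a\in C(\ell)$. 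Hence, under the isomorphisms above, $A$ and $B$ are precisely the preimages of $C_{\mathfrak{gl}_2}(1^m)$ and $C_{\mathfrak{gl}_2}(1^{m-1}2)$; being preimages of $\mathfrak{gl}_2$-subcrystals under a $\mathfrak{gl}_2$-crystal isomorphism they are $\mathfrak{gl}_2$-subcrystals, each is $\mathfrak{gl}_2$-connected (hence connected in the $\mathfrak q(2)$-crystal graph ${\rm Irr}(\mathcal O_n)$), and $C([L(\ell')])=A\sqcup B$.

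The bookkeeping with ${\rm Irr}(\mathcal O_n)\cong\B^{\otimes n}$ is routine; the one place that needs care is the explicit word description of $C_{\mathfrak{gl}_2}(1^{m-1}2)$ — equivalently, ruling out that $I_{\rm fin}(\widehat{a'})$ could be a subset of $\{1,\dots,m-1\}$ other than $\emptyset$ or $\{m-1\}$. This is exactly what the word-form computation for the two $\mathfrak{gl}_2$-components supplies, and it is where the hypothesis $|\widehat\ell|\ge 2$ enters: for $m=1$ the crystal $C(2^m)=C(2)$ does not split.
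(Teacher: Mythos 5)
Your proof is correct and follows essentially the same route as the paper: transport to $\B^{\otimes n}$ via Theorem \ref{main}(2), reduce to $\widehat\ell=2^{m}$ using Proposition \ref{prop-c-l}(2), and split $C(2^m)$ by Proposition \ref{prop-easy-dec}. The only (harmless) difference is in the last step, where the paper invokes Proposition \ref{prop-par-gl}(2) to see that $I_{\rm fin}$ distinguishes the two $\mathfrak{gl}_2$-components, whereas you verify this by an explicit word-level computation — which in fact makes the reverse inclusion $A\sqcup B\subseteq C([L(\ell')])$ more transparent than in the paper's own terse argument.
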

\begin{proof}
 By Theorem \ref{main}(2), we can use the description of $\B^{\otimes n}$ in Section \ref{sec:q2crystal}.
Then by Proposition \ref{prop-c-l}(2),  we may assume that $\ell =2^n$ and hence we obtain $C([L(\ell')]) = C_{\mathfrak{gl}_2}([L(2^n)]) \sqcup C_{\mathfrak{gl}_2}([L(2^{n-1} 1)])$ by Proposition \ref{prop-easy-dec}.
The statement follows  from  Proposition 5.1(2).
\end{proof}

\end{document}